\newcommand {\A}{{\mathcal{A}}}
\newcommand {\BMO}{{\mathrm{BMO}}}
\newcommand {\C}{{\mathbb C}}
\newcommand\cf{\alpha_{S^{n-1}}}
\newcommand {\Da}{\mathcal{J}}
\newcommand {\ud}{\mathrm{d}}
\newcommand {\ue}{\mathrm{e}}
\newcommand {\Ell}{L}
\newcommand {\F}{{\mathcal{F}}}
\newcommand {\HT}{\mathcal{H}}
\newcommand {\Hp}{\mathcal{H}^{p}_{FIO}(\Rn)}
\newcommand {\Hpp}{\mathcal{H}^{p'}_{FIO}(\Rn)}
\newcommand {\ind}{{\mathbf{1}}}
\newcommand {\rb}{\rangle}
\newcommand {\lb}{{\langle}}
\newcommand {\La}{{\mathcal{L}}}
\newcommand {\loc}{{\mathrm{loc}}}
\newcommand {\N}{{{\mathbb N}}}
\newcommand {\ph}{{\varphi}}
\newcommand {\R}{{\mathbb R}}
\newcommand {\Rn}{{\mathbb{R}^{n}}}
\newcommand {\supp}{{\mathrm{supp}}}
\newcommand {\Sp}{S^{*}(\Rn)}
\newcommand {\Spp}{S^{*}_{+}(\Rn)}
\newcommand {\Sw}{\mathcal{S}}
\newcommand {\w}{{\omega}}
\newcommand {\Z}{{{\mathbb Z}}}
\newcommand {\vanish}[1]{\relax}
\newcommand{\wh}{\widehat}
\newcommand{\wt}{\widetilde}
\newcommand{\doublewidetilde}[1]{{%
  \mathpalette\double@widetilde{#1}%
}}
\newcommand{\double@widetilde}[2]{%
  \sbox\z@{$\m@th#1\widetilde{#2}$}%
  \ht\z@=.9\ht\z@
  \widetilde{\box\z@}%
}
\DeclareFontFamily{U}{mathx}{\hyphenchar\font45}
\DeclareFontShape{U}{mathx}{m}{n}{
      <5> <6> <7> <8> <9> <10>
      <10.95> <12> <14.4> <17.28> <20.74> <24.88>
      mathx10
      }{}
\DeclareSymbolFont{mathx}{U}{mathx}{m}{n}
\DeclareMathAccent{\widecheck}{0}{mathx}{"71}
\DeclareMathOperator*{\esssup}{ess\,sup}
\newtheorem{theorem}{Theorem}[section]
\newtheorem{lemma}[theorem]{Lemma}
\newtheorem{proposition}[theorem]{Proposition}
\newtheorem{corollary}[theorem]{Corollary}
\theoremstyle{definition}
\newtheorem{definition}[theorem]{Definition}
\newtheorem{remark}[theorem]{Remark}
\numberwithin{equation}{section}
\protected\def\ignorethis#1\endignorethis{}
\let\endignorethis\relax
\title{Characterizations of Hardy spaces for Fourier integral operators}
\author{Jan Rozendaal}
\address{Mathematical Sciences Institute\\ Australian National University\\Acton ACT 2601\\Australia}
\email{janrozendaalmath@gmail.com}
\keywords{Hardy spaces, Fourier integral operators, characterizations, maximal functions, square functions.}
\subjclass[2010]{Primary 42B35. Secondary 42B30, 35S30, 58J40}
\thanks{This research was supported by grant DP160100941 of the Australian Research Council.}
\begin{document}
\begin{abstract}
We prove several characterizations of the Hardy spaces for Fourier integral operators $\Hp$, for $1<p<\infty$. First we characterize $\Hp$ in terms of $L^{p}(\Rn)$-norms of parabolic frequency localizations. As a corollary, any characterization of $L^{p}(\Rn)$ yields a corresponding version for $\Hp$. In particular, we obtain a maximal function characterization and a characterization in terms of vertical square functions.
\end{abstract}

\maketitle

\section{Introduction}\label{sec:int}

It is well known that $L^{p}(\Rn)$, for $n\geq 1$ and $1<p<\infty$, can be characterized in various ways. There are characterizations in terms of maximal functions, vertical or conical square functions, and many more (see e.g.~\cite{Grafakos14a,Grafakos14b,Stein93}). For $p=1$ such expressions can be used to define the Hardy space $H^{1}(\Rn)$, and for $p=\infty$ appropriately modified versions yield $\BMO(\Rn)$. These characterizations are powerful harmonic analytic tools; for example, they can be used to show that pseudodifferential operators of order zero are bounded on $L^{p}(\Rn)$ for $1<p<\infty$.

On the other hand, it has long been known that Fourier integral operators (FIOs) of order zero are in general not bounded on $L^{p}(\Rn)$, unless $n=1$ or $p=2$. The class of Fourier integral operators extends the class of pseudodifferential operators, and FIOs arise naturally in e.g.~the analysis of wave equations and inverse problems (for more on these operators see \cite{Duistermaat11,Hormander71,Sogge17}). In fact, an FIO $T$, associated with a local canonical graph and having a compactly supported Schwartz kernel, satisfies $T:W^{s_{p},p}(\Rn)\to L^{p}(\Rn)$ for $1<p<\infty$ and $s_{p}:=(n-1)|\frac{1}{p}-\frac{1}{2}|$, and this index cannot be improved in general. This was shown by Seeger, Sogge and Stein in \cite{SeSoSt91}, extending earlier work in \cite{Peral80,Miyachi80} for the classical wave group $(e^{it\sqrt{-\Delta}})_{t\in\R}$. Nonetheless, in \cite{Smith98a} Hart Smith constructed an invariant space $\HT^{1}_{FIO}(\Rn)$ for FIOs of order zero, and this space is big enough to allow him to recover the results in \cite{SeSoSt91}. Recently, in \cite{HaPoRo18} the work of Smith was extended to a full Hardy space theory for FIOs, involving invariant spaces $\Hp$ for FIOs for all $p\in[1,\infty]$. These spaces are embedded into the $L^{p}$-scale, and among their properties we note for example that the classical wave equation is well posed on $\Hp$.

The Hardy spaces $\Hp$ for FIOs are defined in terms of a conical square function, mirroring a similar description of the classical $L^{p}$-spaces but now involving integrals over the cosphere bundle $\Sp=\Rn\times S^{n-1}$ of $\Rn$. More precisely, for $p<\infty$, the $\Hp$-norm of an $f\in\Hp$ is equivalent to the expression
\[
\|q(D)f\|_{L^{p}(\Rn)}+\Big(\int_{\Sp}\Big(\int_{0}^{1}\fint_{B_{\sqrt{\sigma}}(x,\w)}|\psi_{\nu,\sigma}(D)f(y)|^{2}\ud y\ud\nu\frac{\ud\sigma}{\sigma}\Big)^{p/2}\ud x\ud\w\Big)^{1/p}.
\]
Here $q\in C^{\infty}_{c}(\Rn)$ is such that $q(\zeta)=1$ if $|\zeta|\leq 2$, and $\psi_{\nu,\sigma}\in C^{\infty}_{c}(\Rn)$ has the property that $\psi_{\nu,\sigma}(\zeta)=0$ unless $\frac{1}{2}\sigma^{-1}\leq |\zeta|\leq 2\sigma^{-1}$ and $|\hat{\zeta}-\nu|\leq 2\sqrt{\sigma}$, where $\hat{\zeta}=\zeta/|\zeta|$ for $\zeta\neq0$. Moreover, $B_{\sqrt{\sigma}}(x,\w)\subseteq\Sp$ is a ball around $(x,\w)\in\Sp$ of radius $\sqrt{\sigma}$ with respect to an anisotropic metric that arises from contact geometry (for more details see Sections \ref{sec:tent} and \ref{subsec:wavepackets}). For $p=\infty$ appropriate modifications yield $\HT^{\infty}_{FIO}(\Rn)$.

The definition of $\Hp$ in terms of conical square functions allows one to analyze these spaces using powerful tools from harmonic analysis. In particular, the theory of tent spaces due to Coifman, Meyer and Stein \cite{CoMeSt85}, which has proved to be very effective in the analysis of elliptic and parabolic PDEs with rough coefficients (see e.g.~\cite{HoLuMiMiYa11, AuMcIRu08,Hofmann-Mayboroda09,Duong-Yan05}), can also be applied to the Hardy spaces for FIOs. This in turn yields a theory that is adapted to wave equations with rough coefficients, moving away from smooth oscillatory integral representations and instead working with suitable kernel bounds. Nonetheless, the $\Hp$ norm is a relatively involved expression that is not particularly amenable to direct calculations. Moreover, since the $\Hp$ spaces were introduced to provide an analogue of $L^{p}(\Rn)$ suited for the analysis of FIOs and wave equations, one might ask whether they can be characterized in similar ways as the classical $L^{p}$-spaces. In this article we show that this is indeed the case for $1<p<\infty$, by obtaining for every characterization of $L^{p}(\Rn)$ a corresponding one for $\Hp$. In particular, we show that $\Hp$ can be described in a simple manner using $L^{p}(\Rn)$ norms and parabolic frequency localizations. 

Our main result involves functions $\ph_{\w}\in C^{\infty}(\Rn)$, for $\w\in S^{n-1}$, that are defined in Section \ref{subsec:wavepackets}. Some of their properties are contained in Remark \ref{rem:phifunctions}, the most relevant of which for this introduction is that each $\zeta\in\supp(\ph_{\w})$ satisfies $|\zeta|\geq \frac{1}{8}$ and $|\hat{\zeta}-\w|\leq 2|\zeta|^{-1/2}$. Hence the Fourier multiplier $\ph_{\w}(D)$ localizes frequencies to a paraboloid in the direction of $\w$. Moreover, one has suitable anisotropic bounds for $\ph_{\w}$ and its derivatives, as well as for $\int_{S^{n-1}}\ph_{\nu}(\zeta)^{2}\ud\nu$ and $(\int_{S^{n-1}}\ph_{\nu}(\zeta)^{2}\ud\nu)^{-1}$ if $|\zeta|>\frac{1}{2}$. The low-frequency cutoff $q$ is as before. Our main result is then as follows.

\begin{theorem}\label{thm:intro}
Let $p\in(1,\infty)$. Then there exists a constant $C>0$ such that the following holds for all $f\in\Sw'(\Rn)$. One has $f\in\Hp$ if and only if $q(D)f\in L^{p}(\Rn)$, $\ph_{\w}(D)f\in L^{p}(\Rn)$ for almost all $\w\in S^{n-1}$, and 
\[
\Big(\int_{\Sp}|\ph_{\w}(D)f(x)|^{p}\ud x\ud\w\Big)^{\frac{1}{p}}<\infty.
\]
Moreover, 
\[
\frac{1}{C}\|f\|_{\Hp}\leq \|q(D)f\|_{L^{p}(\Rn)}+\Big(\int_{\Sp}|\ph_{\w}(D)f(x)|^{p}\ud x\ud\w\Big)^{\frac{1}{p}}\leq C\|f\|_{\Hp}
\]
if $f\in\Hp$.
\end{theorem}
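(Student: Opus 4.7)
The plan is to rewrite the $\Hp$-norm in two successive steps on the cosphere bundle $\Sp$, reducing it to the proposed $L^{p}(\Sp)$-norm of the parabolic localizations $\ph_{\w}(D)f$. First, for $p<\infty$, the square-function definition of $\|f\|_{\Hp}$ recalled above is exactly (apart from the low-frequency piece $\|q(D)f\|_{L^{p}}$) a tent-space norm $\|Wf\|_{T^{p}(\Sp)}$, where $Wf(x,\nu,\sigma):=\psi_{\nu,\sigma}(D)f(x)$ is the wave packet transform and the tents are built from the contact-geometric balls $B_{\sqrt{\sigma}}(x,\w)$. Since the anisotropic metric on $\Sp$ is doubling, the classical tent-space equivalence for $1<p<\infty$ between the conical area function and the vertical square function in the scale variable gives
\[
\|Wf\|_{T^{p}(\Sp)}\sim \Big\|\Big(\int_{0}^{1}|\psi_{\w,\sigma}(D)f|^{2}\frac{\ud\sigma}{\sigma}\Big)^{\!1/2}\Big\|_{L^{p}(\Sp)}.
\]

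In the second step I fix $\w\in S^{n-1}$ and compare the inner vertical square function with $\ph_{\w}(D)f$ in $L^{p}(\Rn_{x})$. Each $\psi_{\w,\sigma}$ is the parabolic frequency localization to the paraboloid at scale $\sigma^{-1}$ and direction $\w$, while $\ph_{\w}$ is---modulo the low frequencies handled by $q(D)$---the continuous sum of these pieces over $\sigma\in(0,1)$, with the Calder\'on-type reproducing property encoded in $\int_{S^{n-1}}\ph_{\nu}(\zeta)^{2}\ud\nu\sim 1$ for $|\zeta|>1/2$ that is noted in Remark~\ref{rem:phifunctions}. A standard Littlewood--Paley square function argument (Calder\'on reproducing formula together with a Mihlin-type multiplier theorem on $L^{p}(\Rn)$) therefore gives
\[
\Big\|\Big(\int_{0}^{1}|\psi_{\w,\sigma}(D)f|^{2}\frac{\ud\sigma}{\sigma}\Big)^{\!1/2}\Big\|_{L^{p}(\Rn)}\sim \|\ph_{\w}(D)f\|_{L^{p}(\Rn)}
\]
with constants that are uniform in $\w$. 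Integrating the $p$-th power over $\w\in S^{n-1}$ and combining with the previous step produces both inequalities of the theorem; the low-frequency term $\|q(D)f\|_{L^{p}}$ is carried through the argument and absorbs any residual discrepancy below the parabolic regime.

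The main obstacle I anticipate is securing \emph{uniformity in $\w\in S^{n-1}$} in the Littlewood--Paley comparison: the multipliers $\psi_{\w,\sigma}$ and $\ph_{\w}$ have anisotropic symbols whose bounds depend on $\w$ through the parabolic axis, and a priori the multiplier constants could blow up as the direction varies. Delivering $\w$-independent constants requires exploiting the rotation-equivariant parabolic symbol estimates on $\psi_{\w,\sigma}$ and $\ph_{\w}$ referenced in Remark~\ref{rem:phifunctions}, together with vector-valued Calder\'on--Zygmund or $R$-boundedness machinery applied with parameters tracked uniformly in $(\w,\sigma)$. A secondary but more routine matter is pinning down the tent-space identification of the $\Hp$-norm on the anisotropic doubling space $\Sp$, as well as the associated area-to-vertical square function equivalence; both are available from the framework developed in \cite{HaPoRo18} but may require reassembly to fit the precise formulation used here.
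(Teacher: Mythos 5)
Your step 2 (the per-direction Littlewood--Paley comparison, with constants uniform in $\w$) is essentially sound, and it is in fact how the paper passes between Theorem \ref{thm:chargeneral} and Corollary \ref{cor:vertical}: since $\theta_{\w,\sigma}=\Psi_{\sigma}\ph_{\w}$, the vertical square function in $\sigma$ applied to $g=\ph_{\w}(D)f$ involves only the $\w$-independent multipliers $\Psi_{\sigma}$, so uniformity is not the real issue. The genuine gap is your step 1. There is no ``classical tent-space equivalence'' between the conical functional $\A F$ of \eqref{eq:pnorm} and the vertical square function $\big(\int_{0}^{\infty}|F(x,\w,\sigma)|^{2}\frac{\ud\sigma}{\sigma}\big)^{1/2}$ valid for all $1<p<\infty$ on a doubling space: for general $F$ one only has one-sided comparisons depending on whether $p\le 2$ or $p\ge 2$ (this is exactly what Remark \ref{rem:alternateproof} exploits, via \cite{AuHoMa12}, to give an alternative proof of one inequality for $p\in(1,2]$, and exactly why that route fails for $p>2$ and for the reverse inequality). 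For the specific element $F=Wf$ the two-sided equivalence you assert is essentially the content of Corollary \ref{cor:vertical}, which the paper derives \emph{from} Theorem \ref{thm:intro}; taking it as an input is circular. Note also that the conical average in \eqref{eq:pnorm} is over an anisotropic ball in \emph{both} the spatial variable and the direction $\nu$, so it couples nearby directions and cannot be reduced to a fixed-$\w$, one-parameter square-function statement.

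Concretely, your plan contains no substitute for the two devices the paper needs to bridge the conical and the directionwise-$L^{p}$ quantities: (i) for the inequality $\|q(D)f\|_{L^{p}}+\big(\int_{\Sp}|\ph_{\w}(D)f|^{p}\big)^{1/p}\lesssim\|f\|_{\Hp}$, the passage from isotropic averages over $B_{\sigma}(x)\subseteq\Rn$ (coming from the conical square-function characterization of $L^{p}(\Rn)$) to anisotropic averages over $B_{\sqrt{\sigma}}(x,\w)\subseteq\Sp$, carried out via the change of wave packets (Lemma \ref{lem:vary}), the decomposition into the anisotropic annuli $C_{j,\sigma}$, and the change of aperture estimate (Lemma \ref{lem:aperture}); and (ii) for the reverse inequality, a duality argument identifying $\Hp=(\Hpp)^{*}$ and pairing $V(1-q)(D)f$ against $Ug$, precisely because the direct isotropic-to-anisotropic comparison fails in that direction (Remark \ref{rem:proof}). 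Without either of these, both inequalities of the theorem remain unproved in your outline; the appeal to ``vector-valued Calder\'on--Zygmund or $R$-boundedness machinery'' does not address the anisotropy of the balls $B_{\sqrt{\sigma}}(x,\w)$, which is the heart of the matter.
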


Theorem \ref{thm:intro} shows that, up to a low-frequency term, elements of $\Hp$ can be described in terms of their $L^{p}$-behavior when localized to paraboloids in frequency. This description reflects the dyadic-parabolic principle that goes back to Fefferman \cite{Fefferman73b} and which has proved very effective for the analysis of FIOs (see e.g.~\cite[Chapter IX]{Stein93} and \cite{Candes-Demanet05}). The dyadic-parabolic principle is crucial in the $\Hp$ norm, and it refers to the specific scaling in the support of the functions $\psi_{\nu,\sigma}$. Because  
\begin{equation}\label{eq:introeq}
\Big(\int_{\Sp}|\ph_{\w}(D)f(x)|^{p}\ud x\ud\w\Big)^{\frac{1}{p}}=\Big(\int_{S^{n-1}}\|\ph_{\w}(D)f\|_{L^{p}(\Rn)}^{p}\ud\w\Big)^{1/p}
\end{equation}
and because $L^{p}(\Rn)$ can be decomposed in terms of frequency localizations to dyadic annuli, the dyadic-parabolic principle is again evident in Theorem \ref{thm:intro}. (For more on this see below, as well as the remarks immediately following Lemma \ref{lem:packetbounds}).

Theorem \ref{thm:intro} yields a simple description of $\Hp$, similar to the definition of $L^{p}(\Rn)$ but now involving parabolic frequency localizations in all directions. Moreover, by combining Theorem \ref{thm:intro} and \eqref{eq:introeq}, one obtains for every characterization of $L^{p}(\Rn)$ a corresponding one for $\Hp$; one simply applies the characterization to each of the functions $\ph_{\w}(D)f\in L^{p}(\Rn)$. Corollaries \ref{cor:maximal} and \ref{cor:vertical} contain two such characterizations: one in terms of maximal functions and one in terms of vertical square functions. 

To prove Theorem \ref{thm:intro}, which is contained in the main text as Theorem \ref{thm:chargeneral}, we proceed in two steps. First we show, in Section \ref{sec:wavepackets}, that the functions $\psi_{\nu,\sigma}$ in the $\Hp$ norm can be replaced by a different expression involving the functions $\ph_{\nu}$. More precisely, the function $\psi_{\nu,\sigma}$, introduced in Section \ref{subsec:wavepackets}, is the product of a term that localizes to a dyadic annulus of radius $\sigma^{-1}$ and a term that localizes to a cone of aperture $\sqrt{\sigma}$ in the direction of $\nu$. To prove Theorem \ref{thm:intro} it is more convenient to replace the term which localizes to a cone by the function $\ph_{\nu}$, which localizes to a parabola in the direction of $\nu$. This allows one to treat the parameters $\nu$ and $\sigma$ separately, thereby decoupling the dyadic and parabolic localizations. 

In the next step, in Section \ref{sec:char}, we apply the conical square function characterization of $L^{p}(\Rn)$ to \eqref{eq:introeq}. This yields an expression involving the appropriate localization in frequency, but the relevant integral averages are taken over isotropic balls in $\Rn$, instead of the anisotropic balls in $\Sp$ that occur in the $\Hp$ norm. It then remains to move from isotropic averages on $\Rn$ to anisotropic averages on $\Sp$, which requires some delicate technical estimates. It is an open question whether a suitably modified version of Theorem \ref{thm:intro} characterizes $\HT^{1}_{FIO}(\Rn)$ and $\HT^{\infty}_{FIO}(\Rn)$ (see Remark \ref{rem:proof}). 

This article is organized as follows. In Section \ref{sec:tent} we collect some basics on tent spaces over the cosphere bundle, as is necessary to define $\Hp$. In Section \ref{sec:wavepackets} we then introduce the wave packets that are used to localize frequencies in the $\Hp$-norm. We first define $\Hp$ using the wave packets from \cite{HaPoRo18}, and then we show that one can also work with wave packets involving the functions $\ph_{\nu}$. In Section \ref{subsec:changes} we prove a technical lemma which is used in the proof of Theorem \ref{thm:intro} to change between different wave packets. In Section \ref{sec:char} we then prove Theorem \ref{thm:intro}, and we derive from it a maximal function characterization and a vertical square function characterization of $\Hp$.

\subsection{Notation}\label{sec:not}

The natural numbers are $\N=\{1,2,\ldots\}$, and $\Z_{+}=\N\cup\{0\}$. Throughout this article we fix $n\in\N$ with $n\geq2$. With some minor modifications, the techniques in this article also apply for $n=1$. However, $\HT^{p}_{FIO}(\R)=L^{p}(\R)$ for $p\in(1,\infty)$ (see \cite[Theorem 7.4]{HaPoRo18}), and in this case our results are classical.

For $\xi,\eta\in\Rn$ we write $\lb\xi\rb:=(1+|\xi|^{2})^{1/2}$ and $\lb \xi,\eta\rb:=\xi\cdot\eta$, and if $\xi\neq 0$ then $\hat{\xi}:=\xi/|\xi|$. We use multi-index notation, so that $\partial^{\alpha}_{\xi}=\partial^{\alpha_{1}}_{\xi_{1}}\ldots\partial^{\alpha_{n}}_{\xi_{n}}$ for $\xi=(\xi_{1},\ldots,\xi_{n})\in\Rn$ and $\alpha=(\alpha_{1},\ldots,\alpha_{n})\in\Z_{+}^{n}$.

The space of Schwartz functions on $\Rn$ is $\Sw(\Rn)$, and the tempered distributions are $\Sw'(\Rn)$. The Fourier transform of $f\in\Sw'(\Rn)$ is denoted by $\F f$ or $\widehat{f}$, and the inverse Fourier transform by $\F^{-1}f$. If $f\in\Ell^{1}(\Rn)$ then
\begin{align*}
\F f(\xi)=\int_{\Rn}\ue^{-i x\cdot \xi}f(x)\ud x\quad (\xi\in\Rn).
\end{align*}
For $m:\Rn\to\C$ a measurable function of temperate growth, $m(D)$ is the Fourier multiplier with symbol $m$. 

The volume of a measurable subset $B$ of a measure space $(\Omega,\mu)$ is $|B|$. For an integrable $F:B\to\C$, we write
\[
\fint_{B}F(x)\ud \mu(x)=\frac{1}{|B|}\int_{B}F(x)\ud \mu(x)
\]
if $|B|<\infty$. The H\"{o}lder conjugate of $p\in[1,\infty]$ is denoted by $p'$, and the indicator function of a set $E$ is $\ind_{E}$. 

The space of continuous linear operators on a Banach space $X$ is $\La(X)$. We write $f(s)\lesssim g(s)$ to indicate that $f(s)\leq Cg(s)$ for all $s$ and a constant $C\geq0$ independent of $s$, and similarly for $f(s)\gtrsim g(s)$ and $g(s)\eqsim f(s)$.

\section{Tent spaces over the cosphere bundle}\label{sec:tent}

In this section we collect some preliminaries on the tent spaces that are used to define the Hardy spaces for Fourier integral operators. The underlying metric measure space is the cosphere bundle $\Sp:=\Rn\times S^{n-1}$ over $\Rn$, endowed with the standard measure $\ud x\ud\w$ and with a metric that arises from contact geometry. We give the relevant definitions here, but for readers unfamiliar with contact geometry we note that we use only two properties of the metric:
\begin{itemize}
\item that it has a convenient equivalent expression, in \eqref{eq:metric};
\item that $(\Sp,d,\ud x\ud\w)$ is a doubling metric measure space, cf.~\eqref{eq:doubling}.
\end{itemize}

Throughout, we denote elements of $S^{n-1}$ by $\w$ or $\nu$. Let $g_{S^{n-1}}$ be the standard Riemannian metric on $S^{n-1}$. The cosphere bundle $\Sp$ is a contact manifold with respect to the standard contact form $\alpha_{S^{n-1}}:=\w\cdot dx$, the kernel of which is a smooth distribution of codimension $1$ subspaces of the tangent bundle $T(\Sp)$ of $\Sp$. For $(x,\w),(y,\nu)\in\Sp$, set
\[
d((x, \omega), (y, \nu)) := \inf_{\gamma} \int_0^1 |\gamma'(s)|\ud s,
\]
where the infimum is taken over all piecewise $C^1$ curves $\gamma : [0,1] \to \Sp$ such that $\gamma(0) = (x, \omega)$, $\gamma(1) = (y, \nu)$, and $\cf(\gamma'(s)) = 0$ for almost all $s \in [0,1]$. Here $|\gamma'(s)|$ is the length of the tangent vector $\gamma'(s)$ with respect to the product metric $dx^2 + g_{S^{n-1}}$.

We will mostly work with an equivalent analytic expression for $d$. By \cite[Lemma 2.1]{HaPoRo18}, one has
\begin{equation}\label{eq:metric}
d((x,\w),(y,\nu))\eqsim \big(|x-y|^{2}+|\lb\w,x-y\rb|+|\w-\nu|^{2}\big)^{1/2}
\end{equation}
for all $(x,\w),(y,\nu)\in\Sp$, where the implicit constants only depend on $n$. Moreover, we will frequently use that, by \cite[Lemma 2.3]{HaPoRo18}, there exists a constant $C=C(n)>0$ with the following property. For all $\lambda\geq1$, $\tau>0$ and $(x,\w)\in\Sp$, one has
\begin{equation}\label{eq:doubling}
\frac{1}{C}\lambda^{n}|B_{\tau}(x,\w)|\leq |B_{\lambda\tau}(x,\w)|\leq C\lambda^{2n}|B_{\tau}(x,\w)|.
\end{equation}
Here and throughout, the volume $|U|$ of $U\subseteq\Sp$ is taken with respect to the standard measure $\ud x\ud\w$. In particular, the second inequality in \eqref{eq:doubling} shows that $(\Sp,d,\ud x\ud\w)$ is a doubling metric measure space. We note that the different powers of $\lambda$ in \eqref{eq:doubling} correspond to the distinct behavior of small and large balls in $\Sp$. For $\tau>0$ small one has $|B_{\tau}(x,\w)|\eqsim \tau^{2n}$, and for $\tau$ large $|B_{\tau}(x,\w)|\eqsim \tau^{n}$.

We can now define tent spaces over $\Sp$. Let $\Spp:=\Sp\times(0,\infty)$, endowed with the measure $\ud x\ud\w\frac{\ud\sigma}{\sigma}$. For $F\in L^{2}_{\loc}(\Spp)$ and $(x,\w)\in\Sp$, set
\begin{equation}\label{eq:pnorm}
\A F(x,\w):=\Big(\int_{0}^{\infty}\fint_{B_{\sqrt{\sigma}}(x,\w)}|F(y,\nu,\sigma)|^{2}\ud y\ud \nu\frac{\ud \sigma}{\sigma}\Big)^{1/2}\in[0,\infty]
\end{equation}
and
\begin{equation}\label{eq:inftynorm}
\mathcal{C}F(x,\w):=\sup_{B}\Big(\frac{1}{|B|}\int_{T(B)}|F(y,\nu,\sigma)|^{2}\ud y\ud \nu\frac{\ud \sigma}{\sigma}\Big)^{1/2}\in[0,\infty],
\end{equation}
where the supremum is taken over all balls $B\subseteq \Sp$ containing $(x,\w)$, and 
\begin{equation}\label{eq:tent}
T(B):=\{(y,\nu,\sigma)\in\Spp\mid d((y,\nu),B^{c})\geq \sqrt{\sigma}\}
\end{equation}
is the tent over $B$. 

\begin{definition}\label{def:tent spaces}
For $p\in[1,\infty)$, the \emph{tent space} $T^{p}(\Sp)$ consists of all $F\in L^{2}_{\loc}(\Spp)$ such that $\A F\in L^{p}(\Sp)$, endowed with the norm
\[
\|F\|_{T^{p}(\Sp)}:=\|\A F\|_{L^{p}(\Sp)}.
\]
Also, $T^{\infty}(\Sp)$ consists of all $F\in L^{2}_{\loc}(\Spp)$ such that $\mathcal{C}F\in L^{\infty}(\Sp)$, with 
\[
\|F\|_{T^{\infty}(\Sp)}:=\|\mathcal{C}F\|_{L^{\infty}(\Sp)}.
\]
\end{definition}

For all $p\in[1,\infty]$, the tent space $T^{p}(\Sp)$ is a Banach space. For more on the theory of tent spaces see e.g.~\cite{CoMeSt85,Amenta14}. For completeness we note that we in fact consider parabolic tent spaces, due to the factor $\sqrt{\sigma}$ in \eqref{eq:pnorm} and \eqref{eq:tent}. This makes no difference for the theory itself, as can be shown using a simple rescaling (see \cite{AuKrMoPo12}). 

For our main theorem we will need the following quantitative change of aperture formula.

\begin{lemma}\label{lem:aperture}
Let $p\in[1,\infty)$. Then there exists a $C=C(n,p)\geq0$ such that, for all $\lambda\in[1,\infty)$ and $F\in T^{p}(\Sp)$, one has
\[
\Big(\int_{\Sp}\Big(\int_{0}^{\infty}\fint_{B_{\lambda\sqrt{\sigma}}(x,\w)}|F(y,\nu,\sigma)|^{2}\ud y\ud\nu\frac{\ud\sigma}{\sigma}\Big)^{\frac{p}{2}}\ud x\ud\w\Big)^{\frac{1}{p}}\leq C\lambda^{2n\tau_{p}}\|F\|_{T^{p}(\Sp)}.
\]
Here $\tau_{p}=0$ for $p\geq 2$, and $\tau_{p}=\frac{1}{p}-\frac{1}{2}$ for $p<2$.
\end{lemma}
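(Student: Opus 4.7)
My plan is to split into cases using the doubling property \eqref{eq:doubling} of $(\Sp,d,\ud x\ud\w)$, and to handle the general $p$ by interpolation from the $p=1$ and $p=2$ endpoints.

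The case $p=2$ is a direct Fubini calculation. Squaring the left-hand side for $p=2$ and swapping order of integration gives
\[
\int_{0}^{\infty}\!\int_{\Sp}|F(y,\nu,\sigma)|^{2}\Big(\int_{\Sp}\frac{\ind_{B_{\lambda\sqrt{\sigma}}(y,\nu)}(x,\w)}{|B_{\lambda\sqrt{\sigma}}(x,\w)|}\ud x\ud\w\Big)\ud y\ud\nu\frac{\ud\sigma}{\sigma}.
\]
By \eqref{eq:doubling}, for $(x,\w)\in B_{\lambda\sqrt{\sigma}}(y,\nu)$ one has $|B_{\lambda\sqrt{\sigma}}(x,\w)|\eqsim|B_{\lambda\sqrt{\sigma}}(y,\nu)|$, so the inner integral is $\eqsim 1$, uniformly in $\lambda$, $\sigma$, and $(y,\nu)$. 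Taking $\lambda=1$ in particular yields $\|F\|_{T^{2}(\Sp)}^{2}\eqsim\|F\|_{L^{2}(\Spp,\ud x\ud\w\,\ud\sigma/\sigma)}^{2}$, and combining the two bounds gives the lemma at $p=2$ with $\tau_{2}=0$.

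For $p=1$, I would use the atomic decomposition of $T^{1}(\Sp)$ over a doubling metric measure space (cf.~\cite{CoMeSt85,Amenta14}): write $F=\sum_{j}c_{j}a_{j}$ with $\sum_{j}|c_{j}|\lesssim\|F\|_{T^{1}}$ and each $a_{j}$ a $T^{1}$-atom supported in a tent $T(B_{j})$ with $\|a_{j}\|_{L^{2}(\Spp)}\leq|B_{j}|^{-1/2}$. The key geometric observation is that if
\[
\A_{\lambda}a_{j}(x,\w):=\Big(\int_{0}^{\infty}\!\fint_{B_{\lambda\sqrt{\sigma}}(x,\w)}|a_{j}|^{2}\,\ud y\,\ud\nu\,\tfrac{\ud\sigma}{\sigma}\Big)^{1/2}
\]
is nonzero at $(x,\w)$, then some $(y,\nu,\sigma)\in T(B_{j})$ satisfies $(y,\nu)\in B_{\lambda\sqrt{\sigma}}(x,\w)$; since $(y,\nu,\sigma)\in T(B_{j})$ forces both $(y,\nu)\in B_{j}$ and $\sqrt{\sigma}\leq r_{B_{j}}$, the triangle inequality puts $(x,\w)$ in the ball $\tilde B_{j}$ concentric with $B_{j}$ of radius $(\lambda+1)r_{B_{j}}$, which by \eqref{eq:doubling} has $|\tilde B_{j}|\lesssim\lambda^{2n}|B_{j}|$. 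Combining the $p=2$ Fubini bound (applied to $a_{j}$) with H\"older's inequality on $\tilde B_{j}$,
\[
\|\A_{\lambda}a_{j}\|_{L^{1}(\Sp)}\leq\|\A_{\lambda}a_{j}\|_{L^{2}(\Sp)}\cdot|\tilde B_{j}|^{1/2}\lesssim\|a_{j}\|_{L^{2}(\Spp)}(\lambda^{2n}|B_{j}|)^{1/2}\lesssim\lambda^{n}=\lambda^{2n\tau_{1}},
\]
and summing over $j$ gives $\|\A_{\lambda}F\|_{L^{1}}\lesssim\lambda^{n}\|F\|_{T^{1}}$.

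The remaining exponents follow by interpolation. For $p\in(1,2)$, real interpolation between the $p=1$ bound (exponent $\lambda^{n}$) and the $p=2$ bound (constant) at $\theta=2-2/p$ produces $(1-\theta)n=2n(1/p-1/2)=2n\tau_{p}$, exactly as required. For $p\in(2,\infty)$, I would obtain $\tau_{p}=0$ by a duality argument: writing $\|\A_{\lambda}F\|_{p}^{2}=\|(\A_{\lambda}F)^{2}\|_{p/2}$ and testing against $h\in L^{(p/2)'}$, the Fubini swap of the $p=2$ case reduces matters to an average of $h$ over $B_{\lambda\sqrt{\sigma}}(y,\nu)$, which is pointwise controlled by the Hardy--Littlewood maximal function $Mh(y,\nu)$ on $(\Sp,d)$; its $L^{(p/2)'}$-boundedness (together with the $L^{p}$-equivalence of vertical and conical square functions for $p\geq 2$) closes the estimate uniformly in $\lambda$. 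The main obstacle is making the interpolation of the sublinear operator $F\mapsto\A_{\lambda}F$ precise, which I would justify via the real interpolation theory for $T^{p}$ over doubling metric measure spaces developed in \cite{Amenta14}.
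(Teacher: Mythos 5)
Your treatment of $p\in[1,2]$ is essentially sound: the Fubini argument at $p=2$ (uniform in $\lambda$ thanks to \eqref{eq:doubling}), the atomic estimate at $p=1$ (the support of $\A_{\lambda}a_{j}$ lies in a $\lesssim\lambda$-dilate of $B_{j}$, of measure $\lesssim\lambda^{2n}|B_{j}|$ by the second inequality in \eqref{eq:doubling}, so Cauchy--Schwarz gives $\|\A_{\lambda}a_{j}\|_{L^{1}(\Sp)}\lesssim\lambda^{n}=\lambda^{2n\tau_{1}}$), and real interpolation of the sublinear operator $F\mapsto\A_{\lambda}F$ produce exactly $\lambda^{2n(\frac1p-\frac12)}$ for $1<p<2$, provided you cite the atomic decomposition of $T^{1}$ and the identity $(T^{1}(\Sp),T^{2}(\Sp))_{\theta,p}=T^{p}(\Sp)$ for tent spaces over doubling metric measure spaces (Amenta), together with the rescaling observation that handles the parabolic $\sqrt{\sigma}$. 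This is a more self-contained route than the paper, whose proof simply adapts Auscher's change-of-angle argument to the doubling space $(\Sp,d,\ud x\ud\w)$.

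The case $p\in(2,\infty)$, however, has a genuine gap. After dualizing and inserting the maximal function you are left with $\int_{\Sp}\big(\int_{0}^{\infty}|F(y,\nu,\sigma)|^{2}\frac{\ud\sigma}{\sigma}\big)Mh(y,\nu)\ud y\ud\nu$, i.e.\ with the \emph{vertical} square function of $F$, and you then invoke an ``$L^{p}$-equivalence of vertical and conical square functions for $p\geq2$''. No such equivalence holds: for $p\geq2$ one only has conical $\lesssim$ vertical, while your argument needs vertical $\lesssim$ conical, which fails for $p>2$. Indeed, take $F(y,\nu,\sigma)=\ind_{B_{\eps}(y_{0},\nu_{0})}(y,\nu)\ind_{[1/2,1]}(\sigma)$: the vertical square function has $L^{p}(\Sp)$-norm $\eqsim|B_{\eps}(y_{0},\nu_{0})|^{1/p}$, whereas $\|F\|_{T^{p}(\Sp)}\eqsim|B_{\eps}(y_{0},\nu_{0})|^{1/2}$, and the ratio blows up as $\eps\to0$ when $p>2$; the comparison results of Auscher--Hofmann--Martell that this paper uses go in the opposite pairing (vertical $\lesssim$ conical only for $p\leq2$), with converses requiring extra structure on $F$. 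The repair is standard and stays within conical objects: in the dualized expression bound $\fint_{B_{\lambda\sqrt{\sigma}}(y,\nu)}h\lesssim\inf_{(z,\mu)\in B_{\sqrt{\sigma}}(y,\nu)}Mh(z,\mu)\leq\fint_{B_{\sqrt{\sigma}}(y,\nu)}Mh$, with constants independent of $\lambda$ by \eqref{eq:doubling}, and then undo the Fubini step to obtain $\int_{\Sp}(\A F)^{2}\,Mh\leq\|\A F\|_{L^{p}(\Sp)}^{2}\|Mh\|_{L^{(p/2)'}(\Sp)}\lesssim\|F\|_{T^{p}(\Sp)}^{2}$, which closes $p>2$ with a $\lambda$-independent constant, as required.
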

\begin{proof}
With some minor modifications, one can use the argument in \cite{Auscher11}. There the statement is proved, together with a reverse inequality, for tent spaces over $\Rn$ that have a different scaling in their norm. The proof of the statement which we need only relies on the second inequality in \eqref{eq:doubling} and on some basic theory of tent spaces over doubling metric measure spaces.
\end{proof}

\section{Wave packets and Hardy spaces for FIOs}\label{sec:wavepackets}

In this section we introduce the Hardy spaces for FIOs in terms of wave packets. Wave packets are functions that are suitably localized in both the position and momentum variables. We first define the Hardy spaces for FIOs using the wave packets from \cite{HaPoRo18}. These wave packets have a dyadic-parabolic localization built into them. Then we introduce new wave packets for which the parabolic and dyadic localizations are decoupled, and we show that the Hardy spaces for FIOs can also be described in terms of these wave packets. In the next section we will use the new wave packets, and specifically their decoupling of parabolic and dyadic localizations, to prove our main theorem. Also, in Section \ref{subsec:changes} we prove a technical lemma that will allow us to move between different choices of wave packets.

\subsection{Wave packets}\label{subsec:wavepackets}

We first recall the definition of the wave packets from \cite{HaPoRo18}. Fix a non-negative radial $\ph\in C^{\infty}_{c}(\Rn)$ such that $\phi\equiv1$ in a neighborhood of zero, and $\ph(\zeta)=0$ for $|\zeta|>1$. For $\w\in S^{n-1}$, $\sigma>0$ and $\zeta\in\Rn\setminus\{0\}$, set $\ph_{\w,\sigma}(\zeta):=c_{\sigma}\ph\big(\tfrac{\hat{\zeta}-\w}{\sqrt{\sigma}}\big)$, where $c_{\sigma}:=\big(\int_{S^{n-1}}\ph\big(\tfrac{e_{1}-\nu}{\sqrt{\sigma}}\big)^{2}\ud\nu\big)^{-1/2}$ for $e_{1}$ the first basis vector of $\Rn$ (this choice is immaterial). Also set $\ph_{\w,\sigma}(0):=0$. Next, fix a non-negative radial $\Psi\in C^{\infty}_{c}(\Rn)$ such that $\Psi(\zeta)=0$ if $|\zeta|\notin[\frac{1}{2},2]$, and
\begin{equation}\label{eq:Psi}
\int_{0}^{\infty}\Psi(\sigma\zeta)^{2}\frac{\ud \sigma}{\sigma}=1\quad(\zeta\neq0).
\end{equation}
Set $\Psi_{\sigma}(\zeta):=\Psi(\sigma\zeta)$ and $\psi_{\w,\sigma}(\zeta):=\Psi_{\sigma}(\zeta)\ph_{\w,\sigma}(\zeta)$ for $\w\in S^{n-1}$, $\sigma>0$ and $\zeta\in\Rn$. Let
\[
r(\zeta):=\Big(\int_{1}^{\infty}\Psi_{\sigma}(\zeta)^{2}\frac{\ud\sigma}{\sigma}\Big)^{1/2}\quad(\zeta\neq0)
\]
and $r(0):=1$. It is straightforward to prove that $r\in C^{\infty}_{c}(\Rn)$, using that $r$ is radial, by showing that all the derivatives of $r$ vanish where $r(\zeta)=0$. 

Next, we associate a wave packet transform with these wave packets. Such transforms have long been effective tools in microlocal analysis (see e.g.~\cite{Cordoba-Fefferman78,Folland89,Martinez02}). For $f\in \Sw'(\Rn)$ and $(x,\w,\sigma)\in\Spp$, our wave packet transform is given by
\[
Wf(x,\w,\sigma):=\begin{cases}\psi_{\w,\sigma}(D)f(x)&\text{if }\sigma\in(0,1),\\
|S^{n-1}|^{-1/2}\ind_{[1,e]}(\sigma)r(D)f(x)&\text{if }\sigma\geq1.\end{cases}
\]
For later use we also write 
\[
W_{\sigma}f(x,\w):=Wf(x,w,\sigma)
\]
for the transform that maps functions on $\Rn$ to functions on $\Sp$ at a fixed scale $\sigma$. We can now define the Hardy spaces for Fourier integral operators in terms of the tent spaces $T^{p}(\Sp)$, $p\in[1,\infty]$, from Definition \ref{def:tent spaces}.

\begin{definition}\label{def:Hardy}
Let $p\in[1,\infty]$. Then $\Hp$ consists of all $f\in\Sw'(\Rn)$ such that $Wf\in T^{p}(\Sp)$, endowed with the norm
\[
\|f\|_{\Hp}:=\|Wf\|_{T^{p}(\Sp)}
\]
for $f\in\Hp$.
\end{definition}

For all $p\in[1,\infty]$, up to norm equivalence, $\Hp$ is independent of the choice of $\ph$ and $\Psi$, by \cite[Proposition 6.4]{HaPoRo18}. We note that, by \cite[Corollary 7.6]{HaPoRo18}, an equivalent norm on $\Hp$ is given by the expression
\[
\|q(D)f\|_{L^{p}(\Rn)}+\Big(\int_{\Sp}\Big(\int_{0}^{1}\fint_{B_{\sqrt{\sigma}}(x,\w)}|\psi_{\nu,\sigma}(D)f(y)|^{2}\ud y\ud\nu\frac{\ud\sigma}{\sigma}\Big)^{p/2}\ud x\ud\w\Big)^{1/p}
\]
from the introduction, where $q\in C^{\infty}_{c}(\Rn)$ is such that $q(\zeta)=1$ for $|\zeta|\leq 2$.

We now introduce new wave packets. For $\w\in S^{n-1}$ and $\zeta\in\Rn$, set 
\begin{equation}\label{eq:phiomega}
\ph_{\w}(\zeta):=\int_{0}^{4}\psi_{\w,\tau}(\zeta)\frac{\ud\tau}{\tau}.
\end{equation}
In addition, for $\sigma\in(0,1)$, let
\[
\theta_{\w,\sigma}(\zeta):=\Psi_{\sigma}(\zeta)\ph_{\w}(\zeta)
\]
and
\[
\chi_{\w,\sigma}(\zeta):=\begin{cases}
(\int_{S^{n-1}}\ph_{\nu}(\zeta)^{2}\ud\nu)^{-1}\theta_{\w,\sigma}(\zeta)&\text{for }\zeta\in\supp(\theta_{\w,\sigma}),\\
0&\text{otherwise}.
\end{cases}
\]
We collect some properties of these wave packets in the following lemma. 

\begin{lemma}\label{lem:packetbounds}
For $\w\in S^{n-1}$ and $\sigma\in(0,1)$, let $\eta_{\w,\sigma}\in\{\psi_{\w,\sigma},\theta_{\w,\sigma},\chi_{\w,\sigma}\}$. Then $\eta_{\w,\sigma}\in C^{\infty}_{c}(\Rn)$, and each $\zeta\in\supp(\eta_{\w,\sigma})$ satisfies 
\begin{equation}\label{eq:dyadicparabolic}
\frac{1}{2}\sigma^{-1}\leq |\zeta|\leq 2\sigma^{-1}\text{ and }|\hat{\zeta}-\w|\leq 2\sqrt{\sigma}.
\end{equation}
Moreover, for all $\alpha\in\Z_{+}^{n}$ and $\beta\in\Z_{+}$ there exists a constant $C=C(\alpha,\beta)\geq0$ such that
\begin{equation}\label{eq:boundspsi}
|\lb\w,\nabla_{\zeta}\rb^{\beta}\partial_{\zeta}^{\alpha}\eta_{\w,\sigma}(\zeta)|\leq C\sigma^{-\frac{n-1}{4}+\frac{|\alpha|}{2}+\beta}
\end{equation}
for all $(\zeta,\w,\sigma)\in\Spp$. Also, for each $N\geq0$ there exists a $C_{N}\geq0$ such that
\begin{equation}\label{eq:boundspsiinverse}
|\F^{-1}(\eta_{\w,\sigma})(x)|\leq C_{N}\sigma^{-\frac{3n+1}{4}}(1+\sigma^{-1}|x|^{2}+\sigma^{-2}\lb\w,x\rb^{2})^{-N}
\end{equation}
for all $(x,\w,\sigma)\in\Spp$.
\end{lemma}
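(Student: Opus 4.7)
The plan is to handle the three claims (support/localization, smoothness, and then the two families of estimates) in sequence, using $\psi_{\w,\sigma}$ as the template and deducing the results for $\theta_{\w,\sigma}$ and $\chi_{\w,\sigma}$ by differentiating under the $\tau$-integral that defines $\ph_\w$. For \eqref{eq:dyadicparabolic}, the dyadic bound $\tfrac{1}{2}\sigma^{-1}\leq|\zeta|\leq 2\sigma^{-1}$ is immediate from $\Psi_\sigma$; the parabolic bound for $\psi_{\w,\sigma}$ comes directly from the factor $\ph((\hat\zeta-\w)/\sqrt\sigma)$, and for $\theta_{\w,\sigma}$ and $\chi_{\w,\sigma}$ I would argue that $\ph_\w(\zeta)\neq 0$ forces some $\psi_{\w,\tau}(\zeta)\neq 0$ with $\tau\in(0,4)$, and so $\tau\in[1/(2|\zeta|),2/|\zeta|]$ and $|\hat\zeta-\w|\leq \sqrt\tau \leq \sqrt{2/|\zeta|}\leq 2\sqrt\sigma$. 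Smoothness of $\psi_{\w,\sigma}$ and $\theta_{\w,\sigma}$ follows because $\Psi_\sigma$ vanishes near the origin where $\hat\zeta$ is singular; for $\chi_{\w,\sigma}$ the only additional point is that $G(\zeta):=\int_{S^{n-1}}\ph_\nu(\zeta)^2\ud\nu$ is smooth and bounded below on $\supp(\theta_{\w,\sigma})\subseteq\{|\zeta|>1/2\}$, which follows from the standing hypotheses on the $\ph_\nu$ recorded in the introduction.

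For the derivative bound \eqref{eq:boundspsi} the guiding principle is that the support of $\eta_{\w,\sigma}$ has radial extent $\sim\sigma^{-1}$ but tangential extent $\sim\sigma^{-1/2}$, so a general $\partial_\zeta$ should cost $\sqrt\sigma$ while the nearly-radial derivation $\lb\w,\nabla_\zeta\rb$ should cost $\sigma$. Quantitatively, using
\[
\partial_{\zeta_j}\hat\zeta_k = \frac{\delta_{jk}-\hat\zeta_j\hat\zeta_k}{|\zeta|},\qquad \lb\w,\nabla_\zeta\rb\hat\zeta_k = \frac{\w_k-\hat\zeta_k(\w\cdot\hat\zeta)}{|\zeta|},
\]
together with the identity $1-\w\cdot\hat\zeta=\tfrac{1}{2}|\w-\hat\zeta|^2$, an induction in $|\alpha|+\beta$ yields on $\supp(\eta_{\w,\sigma})$ a bound of the form $|\partial_\zeta^\alpha\lb\w,\nabla_\zeta\rb^\beta\hat\zeta|\lesssim \sigma^{|\alpha|+\beta}(\sqrt\sigma)^\beta$. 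Combining this with the chain rule applied to $\ph((\hat\zeta-\w)/\sqrt\sigma)$, whose $\sqrt\sigma$-dilation supplies a factor $\sigma^{-(|\alpha|+\beta)/2}$, and multiplying by $c_\sigma\eqsim\sigma^{-(n-1)/4}$, one obtains \eqref{eq:boundspsi} for $\ph_{\w,\sigma}$; Leibniz with $\Psi_\sigma$ (whose own $\partial_\zeta$-derivatives contribute only the smaller $\sigma^{|\alpha|}$) finishes $\psi_{\w,\sigma}$. For $\theta_{\w,\sigma}$ I differentiate under the $\tau$-integral; the integrand is supported in $\tau\in[1/(2|\zeta|),2/|\zeta|]$, where $\tau\sim\sigma$, and that range has $\ud\tau/\tau$-length $\ln 4$, so the same estimate propagates. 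For $\chi_{\w,\sigma}$ the identical argument bounds derivatives of $G$, and Fa\`a di Bruno together with $G\gtrsim 1$ transfers the bound to $1/G$.

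Finally, for \eqref{eq:boundspsiinverse} I would decompose $x=\lb\w,x\rb\w+x_\perp$ with $x_\perp\cdot\w=0$ and perform repeated integration by parts in $\F^{-1}\eta_{\w,\sigma}(x)=(2\pi)^{-n}\int \ue^{\ui x\cdot\zeta}\eta_{\w,\sigma}(\zeta)\ud\zeta$: $2M$ integrations in the $\w$-direction produce the factor $\lb\w,x\rb^{-2M}$ and the derivative $\lb\w,\nabla_\zeta\rb^{2M}$ on $\eta_{\w,\sigma}$, while $2L$ integrations in the $x_\perp/|x_\perp|$-direction produce $|x_\perp|^{-2L}$ and $2L$ tangential derivatives. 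Combining \eqref{eq:boundspsi} with the volume bound $|\supp(\eta_{\w,\sigma})|\lesssim \sigma^{-(n+1)/2}$ coming from \eqref{eq:dyadicparabolic} yields
\[
|\F^{-1}\eta_{\w,\sigma}(x)|\lesssim \sigma^{-\frac{3n+1}{4}}\Big(\frac{\sigma}{|x_\perp|^2}\Big)^L\Big(\frac{\sigma^2}{\lb\w,x\rb^2}\Big)^M
\]
for every $L,M\in\Z_+$; together with the trivial $L=M=0$ bound, a standard convexity argument produces the decay $(1+\sigma^{-1}|x|^2+\sigma^{-2}\lb\w,x\rb^2)^{-N}$. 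I expect the main obstacle to be the bookkeeping in the second step: keeping track of the anisotropic gain in successive chain-rule expansions of $\ph((\hat\zeta-\w)/\sqrt\sigma)$, and, for $\chi_{\w,\sigma}$, propagating the anisotropic bound from $G$ to $1/G$ via Fa\`a di Bruno.
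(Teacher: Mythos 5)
Your overall architecture matches the paper's proof for most of the lemma: the support statement, the treatment of $\theta_{\w,\sigma}$ by differentiating under the $\tau$-integral over the range $\tau\eqsim\sigma$, and the passage to \eqref{eq:boundspsiinverse} by anisotropic integration by parts are all essentially the paper's argument (the paper simply cites the $\psi_{\w,\sigma}$ case from earlier work and uses a single combined operator $(1+\sigma^{-1}|x|^{2}+\sigma^{-2}\lb\w,x\rb^{2})^{-1}(1-\sigma^{-1}\Delta_{\zeta}-\sigma^{-2}\lb\w,\nabla_{\zeta}\rb^{2})$ instead of separate directional integrations, which is cosmetic).

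However, there is a genuine gap at the one point where this lemma requires new work: the case $\eta_{\w,\sigma}=\chi_{\w,\sigma}$. You dispose of it by saying that $G(\zeta):=\int_{S^{n-1}}\ph_{\nu}(\zeta)^{2}\ud\nu$ is smooth, bounded below on $\{|\zeta|\geq\frac12\}$, and has good derivative bounds ``by the standing hypotheses on the $\ph_{\nu}$ recorded in the introduction.'' There are no such hypotheses: the $\ph_{\nu}$ are concretely defined by \eqref{eq:phiomega}, and the properties of $G$ and $G^{-1}$ advertised in the introduction are precisely what is \emph{proved} inside this lemma (they reappear as \eqref{eq:phiomega1}--\eqref{eq:phiomega2} in Remark \ref{rem:phifunctions}, which refers back to this proof), so invoking them here is circular. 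A correct argument must establish the two-sided bound $G(\zeta)\eqsim1$ for $|\zeta|\geq\frac12$; the paper does this using the normalization \eqref{eq:Psi}, the estimate $c_{\tau}\eqsim\tau^{-\frac{n-1}{4}}$, Cauchy--Schwarz in $\tau$, and the fact that $\ph\big(\tfrac{\hat\zeta-\nu}{\sqrt\tau}\big)=1$ on a spherical cap $E_{\zeta}$ of measure $\eqsim|\zeta|^{-\frac{n-1}{2}}$, while vanishing outside a comparable cap $F_{\zeta}$. A second, smaller omission concerns the derivative bounds on $G^{-1}$ needed for \eqref{eq:boundspsi} with $\beta\geq1$: differentiating under the $\nu$-integral with only isotropic gains yields $|\partial^{\alpha}G|\lesssim|\zeta|^{-|\alpha|/2}$, which would give $\lb\w,\nabla_{\zeta}\rb(1/G)=O(\sqrt\sigma)$ rather than the required $O(\sigma)$; the paper gets the full gain by observing that $G$ is radial (so every derivative gains $|\zeta|^{-1}$), and your Fa\`a di Bruno step should be routed through that observation (or through anisotropic bounds in the $\hat\zeta$-direction combined with $|\w-\hat\zeta|\lesssim\sqrt\sigma$ on the support). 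With the lower bound for $G$ and the radiality argument supplied, the rest of your proposal goes through.
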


We note that the wave packets $\psi_{\w,\sigma}$, $\theta_{\w,\sigma}$ and $\chi_{\w,\sigma}$ are all localized to the dyadic-parabolic region in the direction of $\w$ and at scale $\sigma$ that is defined by \eqref{eq:dyadicparabolic}. Moreover, their inverse Fourier transforms decay rapidly off an inverted dyadic-parabolic region. However, for $\psi_{\w,\sigma}$ this localization is linked to both the $\w$ and $\sigma$ variables simultaneously, through the function $\ph_{\w,\sigma}$, whereas for $\theta_{\w,\sigma}$ and $\chi_{\w,\sigma}$ it is decoupled into the dyadic localization enforced by $\Psi_{\sigma}$ and the parabolic localization coming from $\ph_{\w}$.
\begin{proof}
For $\eta_{\w,\sigma}=\psi_{\w,\sigma}$, the required statements are contained in \cite[Lemma 4.1]{HaPoRo18} for all $\w\in S^{n-1}$ and $\sigma>0$. It is also shown there that 
\begin{equation}\label{eq:csigma}
c_{\sigma}=\Big(\int_{S^{n-1}}\ph\big(\tfrac{e_{1}-\nu}{\sqrt{\sigma}}\big)^{2}\ud\nu\Big)^{-1/2}\eqsim \sigma^{-\frac{n-1}{4}}.
\end{equation}

Next, fix $\w\in S^{n-1}$ and $\sigma\in(0,1)$. For $\eta_{\w,\sigma}=\theta_{\w,\sigma}$ the first statement follows from the support properties of $\ph$ and $\Psi$, by noting that
\[
\theta_{\w,\sigma}=\int^{4\sigma}_{\sigma/4}\Psi_{\sigma}\Psi_{\tau}\ph_{\w,\tau}\frac{\ud\tau}{\tau}.
\]
Moreover, \eqref{eq:boundspsi} is a consequence of the corresponding statement for $\psi_{\w,\tau}$, $\tau\in(\sigma/4,4\sigma)$. Now \eqref{eq:boundspsiinverse} is obtained by integrating by parts with respect to the operator
\[
L:=(1+\sigma^{-1}|x|^{2}+\sigma^{-2}\lb\w,x\rb^{2})^{-1}(1-\sigma^{-1}\Delta_{\zeta}-\sigma^{-2}\lb\w,\nabla_{\zeta}\rb^{2})
\]
in the expression
\[
\F^{-1}(\theta_{\w,\sigma})(x)=(2\pi)^{-n}\int_{\Rn}e^{i x\cdot\zeta}\theta_{\w,\sigma}(\zeta)\ud\zeta\quad(x\in\Rn),
\]
using the bounds in \eqref{eq:boundspsi} and the support properties of $\theta_{\w,\sigma}$. For more on this see the proof of \cite[Lemma 4.1]{HaPoRo18}.

Finally, for $\eta_{\w,\sigma}=\chi_{\w,\sigma}$ we need appropriate bounds for the derivatives of $(\int_{S^{n-1}}\ph_{\nu}^{2}\ud\nu)^{-1}$ on $\supp(\theta_{\w,\sigma})$. To this end, let $c>0$ be such that $\phi(\zeta)=1$ for $|\zeta|\leq c$, and let $\zeta\in\Rn$ be such that $|\zeta|\geq \frac{1}{2}$. Set $E_{\zeta}:=\{\nu\in S^{n-1}\mid |\nu-\hat{\zeta}|\leq c|2\zeta|^{-1/2}\}$ and $F_{\zeta}:=\{\nu\in S^{n-1}\mid |\nu-\hat{\zeta}|\leq \sqrt{2}|\zeta|^{-1/2}\}$. Then 
\[
|E_{\zeta}|\eqsim |F_{\zeta}|\eqsim |\zeta|^{-\frac{n-1}{2}}.
\]
Note also that 
\[
\psi_{\nu,\tau}(\zeta)=c_{\tau}\Psi_{\tau}(\zeta)\ph\big(\tfrac{\hat{\zeta}-\nu}{\sqrt{\tau}}\big)=c_{\tau}\Psi_{\tau}(\zeta)
\]
for $\nu\in E_{\zeta}$ and $\tau\geq |2\zeta|^{-1}$, and that 
\[
\psi_{\nu,\tau}(\zeta)=c_{\tau}\Psi_{\tau}(\zeta)\ph\big(\tfrac{\hat{\zeta}-\nu}{\sqrt{\tau}}\big)=0
\]
for all $\nu\notin F_{\zeta}$ and $\tau>0$, where we used that $\Psi_{\tau}(\zeta)=0$ if $|\zeta|\notin [\frac{1}{2}\tau^{-1},2\tau^{-1}]$. Since $|\zeta|\geq \frac{1}{2}$, one has
\[
\ph_{\nu}(\zeta)=\int_{|2\zeta|^{-1}}^{\min(4,2|\zeta|^{-1})}\psi_{\nu,\tau}(\zeta)\frac{\ud\tau}{\tau}=\int_{|2\zeta|^{-1}}^{2|\zeta|^{-1}}\psi_{\nu,\tau}(\zeta)\frac{\ud\tau}{\tau}.
\]
We now combine all of this with \eqref{eq:Psi} and \eqref{eq:csigma} to obtain
\begin{align*}
&1=\Big(\int_{0}^{\infty}\Psi_{\tau}(\zeta)^{2}\frac{\ud\tau}{\tau}\Big)^{2}=\Big(\int_{|2\zeta|^{-1}}^{2|\zeta|^{-1}}\Psi_{\tau}(\zeta)^{2}\frac{\ud\tau}{\tau}\Big)^{2}\lesssim \Big(\int_{|2\zeta|^{-1}}^{2|\zeta|^{-1}}\Psi_{\tau}(\zeta)\frac{\ud\tau}{\tau}\Big)^{2}\\
&\eqsim|\zeta|^{\frac{n-1}{2}}\int_{E_{\zeta}}\Big(\int_{|2\zeta|^{-1}}^{2|\zeta|^{-1}}\Psi_{\tau}(\zeta)\frac{\ud\tau}{\tau}\Big)^{2}\ud\nu\eqsim \int_{E_{\zeta}}\Big(\int_{|2\zeta|^{-1}}^{2|\zeta|^{-1}}c_{\tau}\Psi_{\tau}(\zeta)\frac{\ud\tau}{\tau}\Big)^{2}\ud\nu\\\
&\leq \int_{S^{n-1}}\Big(\int_{|2\zeta|^{-1}}^{2|\zeta|^{-1}}\psi_{\nu,\tau}(\zeta)\frac{\ud\tau}{\tau}\Big)^{2}\ud\nu=\int_{S^{n-1}}\ph_{\nu}(\zeta)^{2}\ud\nu\\
&=\int_{F_{\zeta}}\Big(\int_{|2\zeta|^{-1}}^{2|\zeta|^{-1}}\psi_{\nu,\tau}(\zeta)\frac{\ud\tau}{\tau}\Big)^{2}\ud\nu\lesssim \int_{F_{\zeta}}\Big(\int_{|2\zeta|^{-1}}^{2|\zeta|^{-1}}\tau^{-\frac{n-1}{4}}\frac{\ud\tau}{\tau}\Big)^{2}\ud\nu\\
&\lesssim |\zeta|^{\frac{n-1}{2}}\int_{F_{\zeta}}\Big(\int_{|2\zeta|^{-1}}^{2|\zeta|^{-1}}\frac{\ud\tau}{\tau}\Big)^{2}\ud\nu\lesssim 1.
\end{align*}
In particular,
\begin{equation}\label{eq:boundbothways}
\int_{S^{n-1}}\ph_{\nu}(\zeta)^{2}\ud\nu\eqsim 1
\end{equation}
for implicit constants independent of $\zeta\in\Rn$ with $|\zeta|\geq \frac{1}{2}$. By combining similar arguments with the bounds for the derivatives of $\psi_{\nu,\tau}$, one can show that
\[
\Big|\lb\hat{\zeta},\nabla_{\zeta}\rb^{\beta}\partial_{\zeta}^{\alpha}\Big(\int_{S^{n-1}}\ph_{\nu}(\zeta)^{2}\ud\nu\Big)^{-1}\Big|\lesssim |\zeta|^{-\frac{|\alpha|}{2}-\beta}
\]
for all $\alpha\in\Z_{+}^{n}$, $\beta\in\Z_{+}$ and $\zeta\in\Rn$ with $|\zeta|\geq \frac{1}{2}$. In fact, because
\[
\int_{S^{n-1}}\ph_{\nu}(\zeta)^{2}\ud\nu=\int_{S^{n-1}}\Big(\int_{0}^{4}c_{\tau}^{2}\Psi(\tau\zeta)^{2}\ph\big(\tfrac{\hat{\zeta}-\nu}{\sqrt{\tau}}\big)\frac{\ud\tau}{\tau}\Big)^{2}\ud\nu
\]
for all $\zeta\in\Rn$ and because $\ph$ and $\Psi$ are radial, $\int_{S^{n-1}}\ph_{\nu}^{2}\ud\nu$ is radial as well. Hence one in fact has
\begin{equation}\label{eq:boundbelow}
\Big|\partial_{\zeta}^{\alpha}\Big(\int_{S^{n-1}}\ph_{\nu}(\zeta)^{2}\ud\nu\Big)^{-1}\Big|\lesssim |\zeta|^{-|\alpha|}
\end{equation}
for all $\alpha\in\Z_{+}^{n}$ and $\zeta\in\Rn$ with $|\zeta|\geq \frac{1}{2}$. 

Now, since for all $\zeta\in\supp(\theta_{\w,\sigma})$ one has $2\sigma^{-1}\geq |\zeta|\geq \frac{1}{2}\sigma^{-1}>\frac{1}{2}$, it follows from \eqref{eq:boundbelow} that
\begin{equation}\label{eq:boundbelow2}
\Big|\partial_{\zeta}^{\alpha}\Big(\int_{S^{n-1}}\ph_{\nu}(\zeta)^{2}\ud\nu\Big)^{-1}\Big|\lesssim \sigma^{|\alpha|}
\end{equation}
for all $\alpha\in\Z_{+}^{n}$ and $\zeta\in\supp(\theta_{\w,\sigma})$. This in turn implies that $\chi_{\w,\sigma}\in C^{\infty}_{c}(\Rn)$ with $\supp(\chi_{\w,\sigma})=\supp(\theta_{\w,\sigma})$. And by combining \eqref{eq:boundbelow2} with what we have already shown, we obtain \eqref{eq:boundspsi} for $\eta_{\w,\sigma}=\chi_{\w,\sigma}$. Then \eqref{eq:boundspsiinverse} follows as before.
\end{proof}

\begin{remark}\label{rem:phifunctions}
The collection $(\ph_{\w})_{\w\in S^{n-1}}\subseteq C^{\infty}(\Rn)$ has the following properties:
\begin{enumerate}
\item\label{it:function1} For all $\w\in S^{n-1}$ and $\zeta\neq0$ one has $\ph_{\w}(\zeta)=0$ if $|\zeta|<\frac{1}{8}$ or $|\hat{\zeta}-\w|>2|\zeta|^{-1/2}$;
\item\label{it:function2} For all $\alpha\in\Z_{+}^{n}$ and $\beta\in\Z_{+}$ there exists a $C=C(\alpha,\beta)\geq0$ such that
\[
|\lb\w,\nabla_{\zeta}\rb^{\beta}\partial^{\alpha}_{\zeta}\ph_{\w}(\zeta)|\leq C|\zeta|^{\frac{n-1}{4}-\frac{|\alpha|}{2}-\beta}
\]
for all $\w\in S^{n-1}$ and $\zeta\neq0$;
\item\label{it:function3} For all $\alpha\in\Z_{+}^{n}$ there exists a $C=C(\alpha,\beta)\geq0$ such that
\begin{equation}\label{eq:phiomega1}
\Big|\partial_{\zeta}^{\alpha}\Big(\int_{S^{n-1}}\ph_{\nu}(\zeta)^{2}\ud\nu\Big)\Big|\leq C |\zeta|^{-|\alpha|}
\end{equation}
for all $\zeta\in\Rn$, and 
\begin{equation}\label{eq:phiomega2}
\Big|\partial_{\zeta}^{\alpha}\Big(\int_{S^{n-1}}\ph_{\nu}(\zeta)^{2}\ud\nu\Big)^{-1}\Big|\leq C |\zeta|^{-|\alpha|}
\end{equation}
if $|\zeta|\geq\frac{1}{2}$. 
\end{enumerate}
Indeed, \eqref{it:function1} and \eqref{it:function2} follow from the properties of $\psi_{\w,\tau}$, for $\tau>0$, by writing 
\[
\ph_{\w}(\zeta)=\int_{|2\zeta|^{-1}}^{\min(4,2|\zeta|^{-1})}\psi_{\w,\tau}(\zeta)\frac{\ud\tau}{\tau}=\int_{|2\zeta|^{-1}}^{\min(4,2|\zeta|^{-1})}\Psi_{\tau}(\zeta)c_{\tau}\ph\big(\tfrac{\hat{\zeta}-\w}{\tau}\big)\frac{\ud\tau}{\tau}
\]
for $|\zeta|\geq\frac{1}{8}$. Moreover, \eqref{eq:phiomega2} is just \eqref{eq:boundbelow}, and \eqref{eq:phiomega1} is derived from \eqref{eq:boundbothways} and the bounds on $\psi_{\nu,\tau}$ in the same manner as \eqref{eq:boundbelow}.
\end{remark}

\begin{remark}\label{rem:support}
The specific support assumptions that we have made on $\ph$ and $\Psi$ can be weakened. Doing so would lead to different support properties of the wave packets in Lemma \ref{lem:packetbounds}, but with minor modifications all the results in this article extend to more general choices of $\ph$ and $\Psi$.
\end{remark}

\subsection{Wave packet transforms}

Here we introduce wave packet transforms associated with the new wave packets $\{\theta_{\w,\sigma}\}_{\w,\sigma}$ and $\{\chi_{\w,\sigma}\}_{\w,\sigma}$. Then we show that these transforms yield equivalent norms on the Hardy spaces for FIOs.

Set 
\begin{equation}\label{eq:s}
s(\zeta):=1-\int_{0}^{1}\int_{S^{n-1}}\theta_{\w,\sigma}(\zeta)\chi_{\w,\sigma}(\zeta)\ud\w\frac{\ud\sigma}{\sigma}\quad(\zeta\in\Rn).
\end{equation}
Note that $s\in C^{\infty}_{c}(\Rn)$ with $s(\zeta)=0$ for $|\zeta|>2$, since for such $\zeta$ one can use \eqref{eq:Psi}:
\begin{align*}
s(\zeta)&=1-\Big(\int_{S^{n-1}}\ph_{\nu}(\zeta)^{2}\ud\nu\Big)^{-1}\int_{0}^{1}\int_{S^{n-1}}\Psi_{\sigma}(\zeta)^{2}\ph_{\w}(\zeta)^{2}\ud\w\frac{\ud\sigma}{\sigma}\\
&=1-\int_{|2\zeta|^{-1}}^{2|\zeta|^{-1}}\Psi_{\sigma}(\zeta)^{2}\frac{\ud\sigma}{\sigma}=0.
\end{align*}
Let $h\in C^{\infty}_{c}(\Rn)$ be such that $h\equiv 1$ on $\supp(s)$. Next, for $f\in \Sw'(\Rn)$ and $(x,\w,\sigma)\in\Spp$, set
\[
Vf(x,\w,\sigma):=\begin{cases}\theta_{\w,\sigma}(D)f(x)&\text{if }\sigma\in(0,1),\\
|S^{n-1}|^{-1/2}\ind_{[1,e]}(\sigma)s(D)f(x)&\text{if }\sigma\geq1,\end{cases}
\]
and
\[
Uf(x,\w,\sigma):=\begin{cases}\chi_{\w,\sigma}(D)f(x)&\text{if }\sigma\in(0,1),\\
|S^{n-1}|^{-1/2}\ind_{[1,e]}(\sigma)h(D)f(x)&\text{if }\sigma\geq1.\end{cases}
\]
We also write
\[
V_{\sigma}f(x,\w):=Vf(x,\w,\sigma)\quad\text{and}\quad U_{\sigma}f(x,\w):=Uf(x,\w,\sigma).
\] 
These transforms have many of the same properties as $W$ and $W_{\sigma}$, contained in \cite[Propositions 4.2 and 4.3]{HaPoRo18}. Some of these properties are formulated in terms of a class $\Da(\Spp)$ of test functions on $\Spp$, and the corresponding class $\Da'(\Spp)$ of distributions. More precisely, $\Da(\Spp)$ is the collection of $F\in L^{\infty}(\Spp)$ such that 
\[
(x,\w,\sigma)\mapsto (1+|x|+\sigma+\sigma^{-1})^{N}F(x,\w,\sigma)
\]
is an element of $L^{\infty}(\Spp)$ for all $N\in\Z_{+}$, endowed with the topology generated by the corresponding weighted $L^{\infty}$-norms. Then $\Da'(\Spp)$ is the space of continuous linear $G:\Da(\Spp)\to \C$, endowed with the topology induced by $\Da(\Spp)$. If $G\in L^{1}_{\loc}(\Spp)$ is such that 
\begin{equation}\label{eq:duality}
F\mapsto \int_{\Spp}F(x,\w,\sigma)\overline{G(x,\w,\sigma)}\ud x\ud\w\frac{\ud\sigma}{\sigma}
\end{equation}
defines an element of $\Da'(\Spp)$, then we identify $G$ with the corresponding element of $\Da'(\Spp)$. We write $\lb G,F\rb_{\Spp}$ for the duality between $G\in \Da'(\Spp)$ and $F\in \Da(\Spp)$. These classes play only a minor role in what follows. However, they allow us to extend some considerations from functions on $\Rn$ to elements of $\Sw'(\Rn)$, using the following lemma.

\begin{lemma}\label{lem:transforms}
The following maps are continuous:
\begin{itemize}
\item $W,V,U:L^{2}(\Rn)\to L^{2}(\Spp,\ud x\ud\w\frac{\ud\sigma}{\sigma})$;
\item $W,V,U:\Sw(\Rn)\to \Da(\Spp)$;
\item $W,V,U:\Sw'(\Rn)\to \Da'(\Spp)$.
\end{itemize}
\end{lemma}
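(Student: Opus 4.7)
The claims for $W$ are already established in \cite[Propositions 4.2 and 4.3]{HaPoRo18}, so the plan is to adapt those arguments to $V$ and $U$. The only input from \cite{HaPoRo18} was a list of size, support, and kernel estimates for $\psi_{\w,\sigma}$ identical in form to the bounds that Lemma \ref{lem:packetbounds} has just given for $\theta_{\w,\sigma}$ and $\chi_{\w,\sigma}$. Thus the scheme is: (i) $L^{2}$ boundedness by Plancherel; (ii) Schwartz continuity by the pointwise kernel estimate \eqref{eq:boundspsiinverse} together with Schwartz-decay of $\widehat{f}$; (iii) distributional continuity by duality through a continuous adjoint on $\Da(\Spp)$.

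For the $L^{2}$ part I would compute, via Plancherel,
\[
\|Vf\|_{L^{2}(\Spp,\,\ud x\ud\w\ud\sigma/\sigma)}^{2}
=\int_{\Rn}|\widehat{f}(\zeta)|^{2}m_{V}(\zeta)\,\ud\zeta+\mathrm{low},
\qquad
m_{V}(\zeta):=\int_{0}^{1}\int_{S^{n-1}}|\theta_{\w,\sigma}(\zeta)|^{2}\ud\w\frac{\ud\sigma}{\sigma},
\]
and show $m_{V}\in L^{\infty}(\Rn)$. The support restriction $|\zeta|\in[\tfrac12\sigma^{-1},2\sigma^{-1}]$ confines $\sigma$ to an interval of logarithmic length $\eqsim 1$, the restriction $|\hat\zeta-\w|\leq 2\sqrt\sigma$ gives $\w$-measure $\eqsim\sigma^{(n-1)/2}$, and the $\Ellinfty$-bound \eqref{eq:boundspsi} with $\alpha=0,\beta=0$ yields $|\theta_{\w,\sigma}(\zeta)|\lesssim\sigma^{-(n-1)/4}$; multiplying these gives $m_{V}(\zeta)\lesssim 1$. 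The same calculation with the extra factor $(\int_{S^{n-1}}\ph_{\nu}(\zeta)^{2}\ud\nu)^{-1}\eqsim 1$ from \eqref{eq:boundbothways} handles $U$. The low-frequency pieces are just $L^{2}$-bounded Fourier multipliers with symbols $s,h\in\Ccinf(\Rn)$.

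For continuity into $\Da(\Spp)$, write $\theta_{\w,\sigma}(D)f=\F^{-1}(\theta_{\w,\sigma})*f$ and use \eqref{eq:boundspsiinverse} to get the spatial weight $(1+|x|)^{-N}$ after convolving with a Schwartz $f$. To produce the weight $(1+\sigma^{-1})^{-N}$ I would exploit that $\theta_{\w,\sigma}$ is frequency-supported in $\{|\zeta|\eqsim\sigma^{-1}\}$, so that on the Fourier side
\[
|\theta_{\w,\sigma}(D)f(x)|\lesssim \int_{|\zeta|\eqsim\sigma^{-1}}|\theta_{\w,\sigma}(\zeta)||\widehat{f}(\zeta)|\ud\zeta\lesssim_{N}\sigma^{N},
\]
by pulling Schwartz-decay of $\widehat{f}$ out of the integral; combining these two estimates gives the full seminorm control of $\Da(\Spp)$. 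The weight $(1+\sigma)^{-N}$ is automatic from the indicator $\ind_{[1,e]}$ in the low-frequency piece, and the same argument, now using the bounds \eqref{eq:boundspsi}, \eqref{eq:boundspsiinverse} for $\chi_{\w,\sigma}$, works for $U$.

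For the distributional part I would define formal adjoints $V^{\sharp},U^{\sharp}:\Da(\Spp)\to\Sw(\Rn)$ by
\[
V^{\sharp}F(x):=\int_{0}^{1}\!\!\int_{S^{n-1}}\overline{\theta_{\w,\sigma}}(D)F(\cdot,\w,\sigma)(x)\,\ud\w\,\frac{\ud\sigma}{\sigma}+|S^{n-1}|^{-1/2}\int_{1}^{e}s(D)\Big(\int_{S^{n-1}}F(\cdot,\w,\sigma)\ud\w\Big)(x)\frac{\ud\sigma}{\sigma},
\]
and analogously for $U^{\sharp}$. The bounds from the previous step, now applied to $F(\cdot,\w,\sigma)$ and integrated against the $\Da(\Spp)$-weights, yield $V^{\sharp},U^{\sharp}:\Da(\Spp)\to\Sw(\Rn)$ continuously. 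Setting $\lb Vf,F\rb_{\Spp}:=\lb f,\overline{V^{\sharp}\overline{F}}\rb$ (and likewise for $U$) gives $V,U:\Sw'(\Rn)\to\Da'(\Spp)$ continuously, and consistency with the identification \eqref{eq:duality} is checked on $L^{2}$ using the already-established $L^{2}$-boundedness. The main technical nuisance, and the only place one has to be careful, is the simultaneous bookkeeping of the $\sigma\to 0$ decay (coming from Schwartz-decay of $\widehat{f}$) and the $\sigma\to\infty$ decay (coming from the support cutoff $\ind_{[1,e]}$); once these are organized with weights $(1+\sigma+\sigma^{-1})^{-N}$ throughout, the three continuity statements follow in parallel for both $V$ and $U$.
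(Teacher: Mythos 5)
Your proposal is correct and follows essentially the same route as the paper: $W$ is cited from \cite[Proposition 4.3]{HaPoRo18}, the $L^{2}$ bounds for $V$ and $U$ come from Plancherel together with the uniform bound on $\int_{0}^{1}\int_{S^{n-1}}|\theta_{\w,\sigma}(\zeta)|^{2}\ud\w\frac{\ud\sigma}{\sigma}$ (the paper gets this by factoring $\theta_{\w,\sigma}=\Psi_{\sigma}\ph_{\w}$ and invoking \eqref{eq:Psi} and \eqref{eq:phiomega1}, while you obtain the same bound by support-size counting), and the $\Da(\Spp)$ and $\Da'(\Spp)$ statements are handled exactly as in \cite[Proposition 4.3]{HaPoRo18}, using only the bounds of Lemma \ref{lem:packetbounds} and $s,h\in C^{\infty}_{c}(\Rn)$, which is what your kernel-estimate and adjoint arguments spell out.
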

\begin{proof}
The statements for $W$ are contained in \cite[Proposition 4.3]{HaPoRo18}, and in fact $W:L^{2}(\Rn)\to L^{2}(\Spp,\ud x\ud\w\frac{\ud\sigma}{\sigma})$ is an isometry. Next, let $f\in L^{2}(\Rn)$ and note that
\begin{align*}
&\|Vf\|_{L^{2}(\Spp)}^{2}=\int_{0}^{1}\int_{S^{n-1}}\|\theta_{\w,\sigma}(D)f\|_{L^{2}(\Rn)}^{2}\ud\w\frac{\ud\sigma}{\sigma}+\|s(D)f\|_{L^{2}(\Rn)}^{2}\\
&\lesssim (2\pi)^{-n}\int_{0}^{1}\int_{S^{n-1}}\int_{\Rn}|\theta_{\w,\sigma}(\zeta)\wh{f}(\zeta)|^{2}\ud\zeta\ud\w\frac{\ud\sigma}{\sigma}+\|f\|_{L^{2}(\Rn)}^{2}\\
&=(2\pi)^{-n}\int_{\Rn}\Big(\int_{0}^{1}\Psi_{\sigma}(\zeta)^{2}\frac{\ud\sigma}{\sigma}\Big)\Big(\int_{S^{n-1}}\ph_{\w}(\zeta)^{2}\ud\w\Big)|\wh{f}(\zeta)|^{2}\ud\zeta+\|f\|_{L^{2}(\Rn)}^{2}\\
&\lesssim \int_{\Rn}|\wh{f}(\zeta)|^{2}\ud\zeta+\|f\|_{L^{2}(\Rn)}^{2}\lesssim \|f\|_{L^{2}(\Rn)}^{2},
\end{align*}
where we used \eqref{eq:Psi}, \eqref{eq:phiomega1} and that $s\in C^{\infty}_{c}(\Rn)$. The proof that $U:L^{2}(\Rn)\to L^{2}(\Spp)$ is bounded is similar, using also \eqref{eq:phiomega2} with $\alpha=0$ and $\beta=0$.

The final two statements for $V$ and $U$ follow as in \cite[Proposition 4.3]{HaPoRo18}; the proofs only use the properties of the wave packets in Lemma \ref{lem:packetbounds} and that $s,h\in C^{\infty}_{c}(\Rn)$.
\end{proof}

This lemma shows that one can extend $W^{*}$, $V^{*}$ and $U^{*}$ to continuous maps from $\Da'(\Spp)$ to $\Sw'(\Rn)$, and the following reproducing formulas then hold:
\begin{equation}\label{eq:repro}
W^{*}Wf=U^{*}Vf=f\quad(f\in\Sw'(\Rn)).
\end{equation}
Indeed, $W^{*}Wf=f$ because $W:L^{2}(\Rn)\to L^{2}(\Spp,\ud x\ud\w\frac{\ud\sigma}{\sigma})$ is an isometry, and for $f\in\Sw(\Rn)$ and $\zeta\in\Rn$ one has
\begin{align*}
\F(U^{*}Vf)(\zeta)&=\int_{0}^{1}\int_{S^{n-1}}\chi_{\w,\sigma}(\zeta)\theta_{\w,\sigma}(\zeta)\wh{f}(\zeta)\ud\w\frac{\ud\sigma}{\sigma}+\int_{1}^{e}\fint_{S^{n-1}}h(\zeta)s(\zeta)\wh{f}(\zeta)\ud \w\frac{\ud\sigma}{\sigma}\\
&=\Big(\int_{0}^{1}\int_{S^{n-1}}\chi_{\w,\sigma}(\zeta)\theta_{\w,\sigma}(\zeta)\ud\w\frac{\ud\sigma}{\sigma}+s(\zeta)\Big)\wh{f}(\zeta)=\wh{f}(\zeta),
\end{align*}
by definition of $s$ in \eqref{eq:s}. For general $f\in\Sw'(\Rn)$ one then obtains $U^{*}Vf=f$ by approximation, using Lemma \ref{lem:transforms}.

We can now describe $\Hp$, for all $p\in[1,\infty]$, in terms of the wave packet transform $V$ and the tent space $T^{p}(\Sp)$ from Definition \ref{def:tent spaces}.

\begin{proposition}\label{prop:Hpnew}
Let $p\in[1,\infty]$. Then there exists a constant $C>0$ such that the following holds for all $f\in\Sw'(\Rn)$. One has $f\in\Hp$ if and only if $Vf\in T^{p}(\Sp)$, and then
\[
\frac{1}{C}\|f\|_{\Hp}\lesssim \|Vf\|_{T^{p}(\Sp)}\lesssim C\|f\|_{\Hp}.
\]
\end{proposition}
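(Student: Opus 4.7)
The plan is to compare the two wave packet transforms $V$ and $W$ by running each through the reproducing formula for the other. From \eqref{eq:repro} one has $W^{*}Wf=f$ and $U^{*}Vf=f$ for every $f\in\Sw'(\Rn)$, so
\[
Vf=(VW^{*})(Wf)\quad\text{and}\quad Wf=(WU^{*})(Vf).
\]
Thus $\|Vf\|_{T^{p}(\Sp)}$ and $\|Wf\|_{T^{p}(\Sp)}=\|f\|_{\Hp}$ are equivalent for all $p\in[1,\infty]$ as soon as the composed operators $VW^{*}$ and $WU^{*}$, which are already bounded on $L^{2}(\Spp,\ud x\ud\w\tfrac{\ud\sigma}{\sigma})$ by Lemma \ref{lem:transforms}, extend to bounded operators on $T^{p}(\Sp)$. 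Proving this tent space boundedness is therefore the core task.

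To this end, I would first examine the Schwartz kernels of $VW^{*}$ and $WU^{*}$. These can be written as superpositions in the frequency variable $\zeta$ of products such as $\theta_{\w,\sigma}(\zeta)\overline{\psi_{\nu,\tau}(\zeta)}$ and $\psi_{\w,\sigma}(\zeta)\overline{\chi_{\nu,\tau}(\zeta)}$, with analogous low-frequency contributions from the cut-offs $r$, $s$ and $h$. Lemma \ref{lem:packetbounds} shows that each of $\psi_{\w,\sigma}$, $\theta_{\w,\sigma}$ and $\chi_{\w,\sigma}$ obeys the \emph{same} dyadic-parabolic support constraint \eqref{eq:dyadicparabolic}, the \emph{same} anisotropic derivative bounds \eqref{eq:boundspsi}, and the \emph{same} rapid off-diagonal decay \eqref{eq:boundspsiinverse} of its inverse Fourier transform. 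So one would integrate by parts in $\zeta$ using the vector fields $\partial_{\zeta}^{\alpha}$ and $\lb\w,\nabla_{\zeta}\rb^{\beta}$ to produce rapid off-diagonal decay of these kernels in the anisotropic metric \eqref{eq:metric} on $\Sp$, uniformly in the scale variables. This is precisely the input required for the Schur-type change-of-wave-packet lemma announced for Section \ref{subsec:changes}, which promotes such off-diagonal kernel bounds into boundedness on $T^{p}(\Sp)$.

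The main obstacle is the uniform off-diagonal estimate. The difficulty is twofold: one must control the interaction between wave packets at possibly different scales $\sigma\neq\tau$ (where the key observation is that $\Psi_{\sigma}\Psi_{\tau}$ vanishes unless $\sigma\eqsim\tau$, so that one effectively only couples nearby scales), and simultaneously the interaction between wave packets in different directions $\w$ and $\nu$. Both require exploiting the anisotropic bounds \eqref{eq:boundspsi} and \eqref{eq:boundspsiinverse} carefully enough to recover decay in $|x-y|^{2}+|\lb\w,x-y\rb|+|\w-\nu|^{2}$ that is integrable against the tent space norm, and to survive the summation over the scale variable after using the Lemma \ref{lem:aperture} change of aperture bound when necessary. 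Once the change-of-wave-packet lemma in Section \ref{subsec:changes} is in place, the proposition will follow immediately from the two displayed identities above.
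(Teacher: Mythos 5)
Your proposal follows essentially the same route as the paper: reduce, via the reproducing formulas \eqref{eq:repro}, to boundedness of $WU^{*}$ and $VW^{*}$ on $T^{p}(\Sp)$, and obtain that boundedness from anisotropic off-diagonal kernel estimates proved by integration by parts with operators adapted to \eqref{eq:metric}, using that the kernel vanishes unless $\sigma\eqsim\tau$ and $|\w-\nu|\lesssim\sqrt{\sigma}+\sqrt{\tau}$. One correction: the step converting the kernel bounds \eqref{eq:kernelbounds} into $T^{p}(\Sp)$-boundedness is not supplied by the lemma of Section \ref{subsec:changes} (Lemma \ref{lem:vary} concerns the ratio $\theta_{\w,\sigma}/\wt{\theta}_{\nu,\sigma}$ and is used only in the proof of Theorem \ref{thm:chargeneral}), but by the off-singularity criterion \cite[Theorem 3.7]{HaPoRo18}; likewise, no change of aperture via Lemma \ref{lem:aperture} is needed at this stage.
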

\begin{proof}
The proposition essentially follows by repeating the arguments in \cite{HaPoRo18} that show that $\Hp$ is independent of the choice of wave packets. More precisely, for $\sigma,\tau>0$, let $K_{\sigma,\tau}$ be the kernel on $\Sp$ of either $W_{\sigma}U_{\tau}^{*}$ or $V_{\sigma}W_{\tau}^{*}$. It suffices to show that, for each $N\geq0$, there exists a $C_{N}\geq0$ independent of $\sigma$ and $\tau$ such that 
\begin{equation}\label{eq:kernelbounds}
|K_{\sigma,\tau}((x,\w),(y,\nu))|\leq C_{N}\min(\tfrac{\sigma}{\tau},\tfrac{\tau}{\sigma})^{N}\rho^{-n}(1+\rho^{-1}d((x,\w),(y,\nu))^{2})^{-N}
\end{equation}
for all $(x,\w),(y,\nu)\in\Sp$, where $\rho:=\min(\sigma,\tau)$. Indeed, then \cite[Theorem 3.7]{HaPoRo18} shows that $WU^{*},VW^{*}\in \La(T^{p}(\Sp))$, and \eqref{eq:repro} yields
\begin{align*}
\|f\|_{\Hp}&=\|Wf\|_{T^{p}(\Sp)}=\|WU^{*}Vf\|_{T^{p}(\Sp)}\lesssim \|Vf\|_{T^{p}(\Sp)}\\
&=\|VW^{*}Wf\|_{T^{p}(\Sp)}\lesssim \|Wf\|_{T^{p}(\Sp)}=\|f\|_{\Hp}
\end{align*}
for all $f\in\Sw'(\Rn)$ such that one of these quantities is finite.

To obtain \eqref{eq:kernelbounds}, one can repeat the arguments in \cite[Section 5]{HaPoRo18}, which use only the properties of the wave packets in Lemma \ref{lem:packetbounds} and that $s,h\in C^{\infty}_{c}(\Rn)$. However, in this case the proof can be simplified considerably, and we will indicate how to do so here for the convenience of the reader.

Fix $\sigma,\tau>0$ and $(x,\w),(y,\nu)\in\Sp$, and first suppose that $\sigma,\tau<1$. We consider the case where $K_{\sigma,\tau}$ is the kernel of $W_{\sigma}U_{\tau}^{*}$, with the other case being almost identical. Note that
\begin{equation}\label{eq:kernelrep}
K_{\sigma,\tau}((x,\w),(y,\nu))=(2\pi)^{-n}\int_{\Rn}e^{i(x-y)\cdot\zeta}\psi_{\w,\sigma}(\zeta)\chi_{\nu,\tau}(\zeta)\ud\zeta.
\end{equation} 
If $\sigma\notin [\tau/4,4\tau]$, then the support properties of $\psi_{\w,\sigma}$ and $\chi_{\nu,\tau}$ from Lemma \ref{lem:packetbounds} imply that $K_{\sigma,\tau}=0$. So we assume henceforth that $\frac{\tau}{4}\leq \sigma\leq 4\tau$. For the same reason we may assume that $|\w-\nu|\leq 2(\sqrt{\sigma}+\sqrt{\tau})$. Then, by \eqref{eq:metric}, it suffices to show that 
\[
|K_{\sigma,\tau}((x,\w),(y,\nu))|\lesssim \sigma^{-n}\big(1+\sigma^{-1}(|x-y|^{2}+|\lb\w,x-y\rb|)\big)^{-N}.
\]
To this end we consider the differential operators
\[
D_{1}:=(1+\sigma^{-1}|x-y|^{2})^{-1}(1-\sigma^{-1}\Delta_{\zeta})
\]
and
\[
D_{2}:=(1+\sigma^{-2}|\lb\w,x-y\rb|^{2})^{-1}(1-\sigma^{-2}\lb\w,\nabla_{\zeta}\rb^{2}).
\] 
Note that $D_{1}e^{i(x-y)\cdot\zeta}=D_{2}e^{i(x-y)\cdot\zeta}=e^{i(x-y)\cdot\zeta}$, and that
\begin{align*}
&(1+\sigma^{-1}|x-y|^{2})^{-1}(1+\sigma^{-2}|\lb \w,x-y\rb|^{2})^{-1}\\
&\lesssim (1+\sigma^{-1}|x-y|^{2})^{-1}(1+\sigma^{-1}|\lb \w,x-y\rb|)^{-2}\\
&\leq \big(1+\sigma^{-1}(|x-y|^{2}+|\lb \w,x-y\rb|)\big)^{-1}.
\end{align*}
Now one integrates by parts repeatedly with respect to $D_{1}$ and $D_{2}$ in \eqref{eq:kernelrep} to introduce these weight factors, and then one uses the properties of $\psi_{\w,\sigma}$ and $\chi_{\nu,\tau}$ in Lemma \ref{lem:packetbounds} to bound $|K_{\sigma,\tau}((x,\w),(y,\nu))|$ from above by a multiple of
\begin{align*}
&\big(1+\sigma^{-1}(|x-y|^{2}+|\lb \w,x-y\rb|)\big)^{-N}\int_{\supp(\psi_{\w,\sigma})}\sigma^{-\frac{n-1}{4}}\tau^{-\frac{n-1}{4}}\ud\zeta\\
&\lesssim \sigma^{-n}\big(1+\sigma^{-1}(|x-y|^{2}+|\lb \w,x-y\rb|)\big)^{-N},
\end{align*}
as required.

The case where $\max(\sigma,\tau)\geq 1$ is similar but simpler. One may assume that $\sigma,\tau\in[c,e]$ for some $c>0$, and then it suffices to show that
\[
|K_{\sigma,\tau}((x,\w),(y,\nu))|\lesssim \big(1+|x-y|^{2}\big)^{-N},
\]
since $S^{n-1}$ is compact and $|\lb\w,x-y\rb|\leq 1+|x-y|^{2}$. For this in turn one can integrate by parts with respect to $(1+|x-y|^{2})(1-\Delta_{\zeta})$ in a similar representation for $K_{\sigma,\tau}$ as in \eqref{eq:kernelrep}.
\end{proof}

\begin{remark}\label{rem:Fmult}
Let $T$ be a \emph{normal oscillatory integral operator} of order zero, as in \cite[Definition 2.11]{HaPoRo18}, that commutes with the Laplacian. Examples of such operators are the wave operators $e^{it\sqrt{-\Delta}}$ for $t\in\R$, as well as $m(D)$ for $m\in C^{\infty}(\Rn)$ satisfying standard symbol estimates of order zero. Then the arguments in the proof of Proposition \ref{prop:Hpnew} can easily be modified to obtain the following version of \eqref{eq:kernelbounds} for the kernel $K_{\sigma,\tau}$ of either $W_{\sigma}TU_{\tau}$ or $V_{\sigma}TW^{*}_{\tau}$:
\begin{equation}\label{eq:offsing}
|K_{\sigma,\tau}((x,\w),(y,\nu))|\leq C_{N}\min(\tfrac{\sigma}{\tau},\tfrac{\tau}{\sigma})^{N}\rho^{-n}(1+\rho^{-1}d((x,\w),\hat{\chi}(y,\nu))^{2})^{-N},
\end{equation}
for $N\geq0$, $(x,\w,\sigma),(y,\nu,\tau)\in\Spp$, $\rho=\min(\sigma,\tau)$ and a suitable bi-Lipschitz map $\hat{\chi}:\Sp\to\Sp$ associated with $T$. In \cite{HaPoRo18} such bounds were called \emph{off-singularity bounds}, and they play a key role in showing that $T$ is a bounded operator on $\Hp$ for all $p\in[1,\infty]$. In fact, \eqref{eq:offsing} and \cite[Theorem 3.7]{HaPoRo18} yield
\begin{equation}\label{eq:tentbdd}
WTU\in \La(T^{p}(\Sp)),
\end{equation} 
and it then follows from \eqref{eq:repro} and Proposition \ref{prop:Hpnew} that 
\[
\|Tf\|_{\Hp}=\|WTUVf\|_{T^{p}(\Sp)}\lesssim \|Vf\|_{T^{p}(\Sp)}\eqsim \|f\|_{\Hp}
\]
for all $f\in\Hp$. In \cite[Section 5]{HaPoRo18} the off-singularity bounds in \eqref{eq:offsing} were obtained for more general normal oscillatory integral operators, but with a considerably more involved proof that can be simplified if $T$ commutes with the Laplacian. 
\end{remark}

As in \cite[Corollary 7.6]{HaPoRo18}, we can use Proposition \ref{prop:Hpnew} to obtain equivalent norms on $\Hp$ for all $p\in[1,\infty]$. For $f\in\Sw'(\Rn)$ and $(x,\w)\in\Sp$, set
\[
Sf(x,\w):=\Big(\int_{0}^{1}\fint_{B_{\sqrt{\sigma}}(x,\w)}|\theta_{\nu,\sigma}(D)f(y)|^{2}\ud y\ud\nu\frac{\ud\sigma}{\sigma}\Big)^{1/2}\in[0,\infty]
\]
and
\[
Q f(x,\w):=\sup_{B}\Big(\frac{1}{V(B)}\int_{T(B)}\ind_{[0,1]}(\sigma)|\theta_{\nu,\sigma}(D)f(y)|^{2}\ud y\ud\nu\frac{\ud\sigma}{\sigma}\Big)^{1/2}\in[0,\infty],
\]
where the supremum is taken over all balls $B\subseteq\Sp$ containing $(x,\w)$. 

\begin{corollary}\label{cor:equivnorm}
Let $q\in C^{\infty}_{c}(\Rn)$ be such that $q(\zeta)=1$ for $|\zeta|\leq 2$, and let $p\in[1,\infty]$ and $f\in\Sw'(\Rn)$. Then the following assertions hold.
\begin{enumerate}
\item For $p<\infty$, one has $f\in\Hp$ if and only if $Sf\in L^{p}(\Sp)$ and $q(D)f\in L^{p}(\Rn)$, and then 
\[
\|f\|_{\Hp}\eqsim \|Sf\|_{L^{p}(\Sp)}+\|q(D)f\|_{L^{p}(\Rn)}
\]
for an implicit constant independent of $f$. 
\item One has $f\in \HT^{\infty}_{FIO}(\Rn)$ if and only if $Qf\in L^{\infty}(\Sp)$ and $q(D)f\in L^{\infty}(\Rn)$, and then 
\[
\|f\|_{\HT^{\infty}_{FIO}(\Rn)}\eqsim \|Qf\|_{L^{p}(\Sp)}+\|q(D)f\|_{L^{\infty}(\Rn)}
\]
for an implicit constant independent of $f$.
\end{enumerate}
\end{corollary}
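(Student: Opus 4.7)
The plan is to start with Proposition \ref{prop:Hpnew}, which identifies $\|f\|_{\Hp}$ with $\|Vf\|_{T^{p}(\Sp)}$, and then expand the latter tent-space norm by splitting the scale integral at $\sigma=1$. For $p<\infty$, since $Vf(\cdot,\cdot,\sigma)=|S^{n-1}|^{-1/2}\ind_{[1,e]}(\sigma)\,s(D)f$ for $\sigma\geq 1$ and $Vf(y,\nu,\sigma)=\theta_{\nu,\sigma}(D)f(y)$ for $\sigma<1$, one has the pointwise identity
\[
\A(Vf)(x,\w)^{2}=Sf(x,\w)^{2}+R(x,\w)^{2},
\]
where $R(x,\w)^{2}:=|S^{n-1}|^{-1}\int_{1}^{e}\fint_{B_{\sqrt\sigma}(x,\w)}|s(D)f(y)|^{2}\ud y\ud\nu\frac{\ud\sigma}{\sigma}$. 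Since $p\geq 1$, taking $L^{p}(\Sp)$-norms yields $\|Vf\|_{T^{p}(\Sp)}\eqsim \|Sf\|_{L^{p}(\Sp)}+\|R\|_{L^{p}(\Sp)}$, reducing matters to identifying $\|R\|_{L^{p}(\Sp)}$ with $\|q(D)f\|_{L^{p}(\Rn)}$.

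For $\sigma\in[1,e]$, the metric formula \eqref{eq:metric} and the doubling bound \eqref{eq:doubling} force $B_{\sqrt\sigma}(x,\w)$ to have volume bounded above and below by absolute constants, and its projection on $\Rn$ to lie between two Euclidean balls $B_{c}(x)$ and $B_{C}(x)$ of fixed radii (uniformly in $\w$). Integrating the bounded variables $\nu$ and $\sigma$ out therefore gives
\[
R(x,\w)^{2}\eqsim \int_{B_{c}(x)}|s(D)f(y)|^{2}\ud y,
\]
so that $\|R\|_{L^{p}(\Sp)}^{p}\eqsim \int_{\Rn}(\int_{B_{c}(x)}|s(D)f(y)|^{2}\ud y)^{p/2}\ud x$. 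The function $g:=s(D)f$ is band-limited to $\{|\zeta|\leq 2\}$, and a standard Plancherel--Pólya argument—write $g=\tilde s(D)g$ for $\tilde s\in C^{\infty}_{c}$ equal to one on $\supp(s)$, use Cauchy--Schwarz against the Schwartz kernel $\F^{-1}\tilde s$ to localize at scale one, and then compare $\ell^{2}$ and $\ell^{p}$ norms on a lattice of bounded cardinality inside $B_{c}(x)$—yields the two-sided equivalence $\|R\|_{L^{p}(\Sp)}\eqsim \|s(D)f\|_{L^{p}(\Rn)}$ for every $p\in[1,\infty)$.

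The final step is to trade the natural cutoff $s$ for the given cutoff $q$. Since $q\equiv 1$ on $\supp(s)\subseteq\{|\zeta|\leq 2\}$ one has $sq=s$, giving $s(D)f=s(D)q(D)f$ and hence $\|s(D)f\|_{L^{p}}\lesssim\|q(D)f\|_{L^{p}}$. For the reverse, I write $q(D)f=s(D)f+(q-s)(D)f$ and apply the reproducing identity $f=s(D)f+\int_{0}^{1}\!\int_{S^{n-1}}\chi_{\nu,\sigma}(D)\theta_{\nu,\sigma}(D)f\ud\nu\frac{\ud\sigma}{\sigma}$, noting that $(q-s)\cdot s=s-s^{2}$ rearranges cleanly to give
\[
(q-s)(D)f=\int_{\sigma_{*}}^{1}\int_{S^{n-1}}q(D)\chi_{\nu,\sigma}(D)\theta_{\nu,\sigma}(D)f\ud\nu\frac{\ud\sigma}{\sigma},
\]
the $\sigma$-integral being restricted to a fixed compact subinterval of $(0,1]$ because $q\chi_{\nu,\sigma}$ vanishes unless $\sigma$ is bounded below in terms of the support radius of $q$. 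Minkowski's inequality, uniform $L^{p}$-boundedness of $q(D)\chi_{\nu,\sigma}(D)$ (coming from the $L^{1}$-bounds on $\F^{-1}(\chi_{\nu,\sigma})$ implicit in Lemma \ref{lem:packetbounds}), and a further application of the band-limited Plancherel--Pólya equivalence to pass from $L^{p}$-norms of $\theta_{\nu,\sigma}(D)f$ to the conical averages defining $Sf$ then bound $\|(q-s)(D)f\|_{L^{p}(\Rn)}$ by $\|Sf\|_{L^{p}(\Sp)}$, completing part (1).

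For part (2), the same split applies to $\mathcal{C}(Vf)$: the $\sigma<1$ part directly gives $Qf$, while the $\sigma\in[1,e]$ part is a Carleson-type supremum of local $L^{2}$-averages of $s(D)f$ over balls in $\Sp$ which, by the band-limited mean-value inequality applied pointwise, is equivalent to $\|s(D)f\|_{L^{\infty}(\Rn)}$ and hence, by the same reproducing-formula argument, to $\|q(D)f\|_{L^{\infty}(\Rn)}$ modulo $\|Qf\|_{L^{\infty}(\Sp)}$. The main technical obstacle throughout is the band-limited Plancherel--Pólya equivalence between local $L^{2}$-averages and the global $L^{p}$-norm of $s(D)f$; once that is in hand, the transfer from the natural low-frequency cutoff $s$ produced by the wave-packet construction to the arbitrary cutoff $q$ in the statement is a routine consequence of the support of $q-s$ being contained in a bounded annulus.
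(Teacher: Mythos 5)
Your route is genuinely different from the paper's: the paper deduces the corollary from Proposition \ref{prop:Hpnew} together with the tent-space bound \eqref{eq:tentbdd} applied to $T=(1-q)(D)$ and the Sobolev embeddings of \cite[Theorem 7.4]{HaPoRo18}, using the identities $(1-s)(D)f=U^{*}F$ and $1-q=(1-q)(1-s)$; you instead split $\|Vf\|_{T^{p}(\Sp)}$ at $\sigma=1$, identify the low-frequency piece with $\|s(D)f\|_{L^{p}(\Rn)}$ by a Plancherel--P\'olya argument, and then trade $s$ for $q$. The splitting, the identification $\|R\|_{L^{p}(\Sp)}\eqsim\|s(D)f\|_{L^{p}(\Rn)}$, the identity $(q-s)=q(1-s)$ with the resulting formula $(q-s)(D)f=\int_{\sigma_{*}}^{1}\int_{S^{n-1}}q(D)\chi_{\nu,\sigma}(D)\theta_{\nu,\sigma}(D)f\,\ud\nu\frac{\ud\sigma}{\sigma}$, and the uniform $L^{p}$-bounds for $q(D)\chi_{\nu,\sigma}(D)$ on $\sigma\in[\sigma_{*},1]$ are all correct.

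The gap is in the last step of the cutoff exchange. After Minkowski you must bound $\int_{\sigma_{*}}^{1}\int_{S^{n-1}}\|\theta_{\nu,\sigma}(D)f\|_{L^{p}(\Rn)}\,\ud\nu\frac{\ud\sigma}{\sigma}$ by $\|Sf\|_{L^{p}(\Sp)}$ (plus $\|s(D)f\|_{L^{p}}$), and you assert this is ``a further application of the band-limited Plancherel--P\'olya equivalence.'' But Plancherel--P\'olya only compares a \emph{fixed} band-limited function with its own local $L^{2}$-averages; it does not compare the single packet $\theta_{\nu,\sigma}(D)f$ with the $(\nu',\sigma')$-\emph{averaged family} $\{\theta_{\nu',\sigma'}(D)f\}$ that appears inside $Sf$, and no pointwise domination of one packet by that average holds. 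Bridging this requires an additional argument of the change-of-wave-packet / square-function-comparison type: for instance, keep the whole $(\nu,\sigma)$-family together (do not apply Minkowski first), estimate $\fint_{B_{1}(x)}|(q-s)(D)f|^{2}$ by $\sum_{j}(1+|j|)^{-N}\int_{\sigma_{*}}^{1}\int_{S^{n-1}}\int_{B_{2}(x+j)}|\theta_{\nu,\sigma}(D)f|^{2}$, and then handle $p\geq2$ by averaging $Sf(x,\cdot)^{2}$ over $S^{n-1}$ and using $L^{p}(S^{n-1})\hookrightarrow L^{2}(S^{n-1})$, and $p\leq2$ by a Jensen-type argument (the naive norm interchanges go the wrong way in one of the two ranges); alternatively one can simply quote, as the paper does, $\|q(D)f\|_{L^{p}(\Rn)}\lesssim\|f\|_{W^{-s_{p},p}(\Rn)}\lesssim\|f\|_{\Hp}$ from \cite[Theorem 7.4]{HaPoRo18}. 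The same unaddressed issue recurs in your part (2), where the corresponding bound $\|(q-s)(D)f\|_{L^{\infty}(\Rn)}\lesssim\|Qf\|_{L^{\infty}(\Sp)}$ against the Carleson functional is again not a consequence of the band-limited mean-value inequality alone. So the architecture is viable and more self-contained than the paper's, but as written the decisive inequality is asserted rather than proved.
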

\begin{proof}
We argue as in the proof of \cite[Corollary 7.6]{HaPoRo18}. For $(x,\w,\sigma)\in\Spp$ let
\[
F(x,\w,\sigma):=\begin{cases}
\theta_{\w,\sigma}(D)f(x)&\text{if }\sigma<1,\\
0&\text{otherwise},
\end{cases}
\]
and first consider $p<\infty$. Then $\|F\|_{T^{p}(\Sp)}=\|Sf\|_{L^{p}(\Sp)}$ and $(1-s)(D)f=U^{*}F$. Moreover, $1-q=(1-q)(1-s)$. Now, for $s>0$ sufficiently large, we use the Sobolev embeddings for $\Hp$ in \cite[Theorem 7.4]{HaPoRo18}, as well as \eqref{eq:tentbdd} with $T=(1-q)(D)$ and that $q\in C^{\infty}_{c}(\Rn)$, to write
\begin{align*}
\|f\|_{\Hp}&\leq \|(1-q)(D)f\|_{\Hp}+\|q(D)f\|_{\Hp}\\
&\lesssim \|(1-q)(D)(1-s)(D)f\|_{\Hp}+\|q(D)f\|_{W^{s,p}(\Rn)}\\
&\lesssim \|W(1-q)(D)U^{*}F\|_{T^{p}(\Sp)}+\|q(D)f\|_{L^{p}(\Rn)}\\
&\lesssim \|F\|_{T^{p}(\Sp)}+\|q(D)f\|_{L^{p}(\Rn)}=\|Sf\|_{L^{p}(\Sp)}+\|q(D)f\|_{L^{p}(\Rn)}\\
&\lesssim \|Vf\|_{T^{p}(\Sp)}+\|f\|_{W^{-s,p}(\Rn)}\lesssim \|f\|_{\Hp}.
\end{align*}
For $p=\infty$ one simply replaces $Sf$ by $Qf$ throughout.
\end{proof}

\subsection{Change of wave packets}\label{subsec:changes}

In the next section we will want to move between different choices of wave packets, for which we need a lemma. Suppose that, in addition to the assumptions on $\ph$ and $\Psi$ from before, one has $\ph(\zeta)=0$ for all $|\zeta|>\frac{1}{4}$, and $\Psi(\zeta)=0$ if $|\zeta|\notin [\frac{4}{5},\frac{5}{4}]$. Let $\wt{\ph}\in C^{\infty}_{c}(\Rn)$ be a non-negative function such that $\wt{\ph}(\zeta)=1$ for $|\zeta|\leq \frac{3}{4}$ and $\wt{\ph}(\zeta)=0$ for $|\zeta|>1$. Let $\wt{\Psi}\in C_{c}^{\infty}(\Rn)$ be a non-negative radial function such that $\wt{\Psi}(\zeta)=1$ for $|\zeta|\in[\frac{2}{3},\frac{3}{2}]$, $\wt{\Psi}(\zeta)=0$ for $|\zeta|\notin [\frac{1}{2},2]$, and 
\[
\int_{0}^{\infty}\wt{\Psi}(\sigma\zeta)^{2}\frac{\ud \sigma}{\sigma}=1\quad(\zeta\neq0).
\]
Note that such a $\wt{\Psi}$ exists because $\int_{2/3}^{3/2}\frac{\ud\sigma}{\sigma}=\log(9/4)<1$. In the same manner as before, for $\w\in S^{n-1}$, $\sigma>0$ and $\zeta\in\Rn\setminus\{0\}$, set $\wt{\ph}_{\w,\sigma}(\zeta):=\wt{c}_{\sigma}\wt{\ph}\big(\tfrac{\hat{\zeta}-\w}{\sqrt{\sigma}}\big)$, where $\wt{c}_{\sigma}:=\big(\int_{S^{n-1}}\wt{\ph}\big(\tfrac{e_{1}-\nu}{\sqrt{\sigma}}\big)^{2}\ud\nu\big)^{-1/2}$. Let $\wt{\ph}_{\w,\sigma}(0):=0$, and set $\wt{\Psi}_{\sigma}(\zeta):=\wt{\Psi}(\sigma\zeta)$, $\wt{\psi}_{\w,\sigma}(\zeta):=\wt{\Psi}_{\sigma}(\zeta)\wt{\ph}_{\w,\sigma}(\zeta)$, and
\[
\wt{\ph}_{\w}(\zeta):=\int_{0}^{4}\wt{\psi}_{\w,\tau}(\zeta)\frac{\ud\tau}{\tau},\quad \wt{\theta}_{\w,\sigma}(\zeta):=\wt{\Psi}_{\sigma}(\zeta)\wt{\ph}_{\w}(\zeta).
\]
It then follows from Proposition \ref{prop:Hpnew}, with $\ph$ and $\Psi$ replaced by $\wt{\ph}$ and $\wt{\Psi}$, that one can use these new wave packets to obtain an equivalent norm on $\Hp$ for all $p\in[1,\infty]$. In particular, 
\begin{equation}\label{eq:newnorm}
\Big(\int_{\Sp}\Big(\int_{0}^{1}\fint_{B_{\sqrt{\sigma}}(x,\w)}|\wt{\theta}_{\w,\sigma}(D)f(y)|^{2}\ud y\ud\nu\frac{\ud\sigma}{\sigma}\Big)^{\frac{p}{2}}\ud x\ud\w\Big)^{\frac{1}{p}}\lesssim \|f\|_{\Hp}
\end{equation}
for all $p<\infty$ and $f\in\Hp$.

\begin{lemma}\label{lem:vary}
There exists a constant $c>0$ such that the following holds. Let $\w,\nu \in S^{n-1}$ and $\sigma\in(0,1)$ be such that $|\w-\nu|\leq \frac{1}{16}\sqrt{\sigma}$. Define $\eta_{\w,\nu,\sigma}:\Rn\to[0,\infty)$ by 
\[
\eta_{\w,\nu,\sigma}(\zeta):=\begin{cases}
\frac{\theta_{\w,\sigma}(\zeta)}{\tilde{\theta}_{\nu,\sigma}(\zeta)}&\text{for }\zeta\in\supp(\theta_{\w,\sigma}),\\
0&\text{otherwise}.
\end{cases}
\]
Then $\eta_{\w,\nu,\sigma}\in C^{\infty}_{c}(\Rn)$, and for each $N\geq0$ there exists a $C_{N}\geq0$, independent of $\w$, $\nu$ and $\sigma$, such that
\[
|\F^{-1}(\eta_{\w,\nu,\sigma})(x)|\leq C_{N}\sigma^{-\frac{n+1}{2}}(1+\sigma^{-1}|x|^{2}+\sigma^{-2}\lb\w,x\rb^{2})^{-N}\quad(x\in \Rn).
\]
\end{lemma}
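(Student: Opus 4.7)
The plan is to reduce the statement to the same integration-by-parts argument used to prove \eqref{eq:boundspsiinverse}. For this I need two ingredients: (i) the uniform lower bound $\wt\theta_{\nu,\sigma}(\zeta)\gtrsim\sigma^{-(n-1)/4}$ on $\supp(\theta_{\w,\sigma})$, so that $\eta_{\w,\nu,\sigma}$ is a well-defined smooth function with $\supp(\eta_{\w,\nu,\sigma})=\supp(\theta_{\w,\sigma})$, and (ii) the anisotropic derivative bounds
\[
|\lb\w,\nabla_{\zeta}\rb^{\beta}\partial_{\zeta}^{\alpha}\eta_{\w,\nu,\sigma}(\zeta)|\lesssim \sigma^{|\alpha|/2+\beta}
\]
uniformly in $\w,\nu,\sigma$ satisfying the hypotheses. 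Once these are in hand, integrating by parts $N$ times in the Fourier inversion formula against the operator
\[
L=(1+\sigma^{-1}|x|^{2}+\sigma^{-2}\lb\w,x\rb^{2})^{-1}(1-\sigma^{-1}\Delta_{\zeta}-\sigma^{-2}\lb\w,\nabla_{\zeta}\rb^{2})
\]
yields a bounded integrand, produces the factor $(1+\sigma^{-1}|x|^{2}+\sigma^{-2}\lb\w,x\rb^{2})^{-N}$, and contributes the prefactor $\sigma^{-(n+1)/2}$ from the volume of $\supp(\theta_{\w,\sigma})$ (radial thickness $\sim\sigma^{-1}$, $r^{n-1}\sim\sigma^{-(n-1)}$, angular cap of area $\sim\sigma^{(n-1)/2}$, combining to $\sigma^{-(n+1)/2}$).

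The main step is (i). The tighter support assumptions on $\Psi$ and $\ph$ force every $\zeta\in\supp(\theta_{\w,\sigma})$ to satisfy $\sigma|\zeta|\in[\tfrac{4}{5},\tfrac{5}{4}]$, and tracing which $\tau$ can contribute to $\ph_{\w}(\zeta)$ (the supports of $\Psi_{\sigma}$ and $\Psi_{\tau}$ only overlap for $\tau/\sigma\in[\tfrac{16}{25},\tfrac{25}{16}]$) shows also that $|\hat\zeta-\w|\leq\tfrac{5}{16}\sqrt\sigma$. Combined with $|\w-\nu|\leq\tfrac{1}{16}\sqrt\sigma$, this gives $|\hat\zeta-\nu|\leq\tfrac{3}{8}\sqrt\sigma$. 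A direct computation then shows that for every $\tau\in[\tfrac{5\sigma}{6},\tfrac{6\sigma}{5}]$ and every such $\zeta$ one has $\wt\Psi_{\tau}(\zeta)=1$ (because $\tau|\zeta|\in[\tfrac{2}{3},\tfrac{3}{2}]$) and $|\hat\zeta-\nu|\leq\tfrac{3}{4}\sqrt\tau$, so $\wt\ph_{\nu,\tau}(\zeta)=\wt c_{\tau}\eqsim\sigma^{-(n-1)/4}$ by the analogue of \eqref{eq:csigma}. Integrating over this fixed interval in the definition of $\wt\ph_{\nu}$ and noting that $\wt\Psi_{\sigma}\equiv 1$ on $\supp(\theta_{\w,\sigma})$ yields the required lower bound on $\wt\theta_{\nu,\sigma}$.

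For (ii), Lemma \ref{lem:packetbounds} gives $|\lb\nu,\nabla_{\zeta}\rb^{\beta}\partial_{\zeta}^{\alpha}\wt\theta_{\nu,\sigma}|\lesssim\sigma^{-(n-1)/4+|\alpha|/2+\beta}$; expanding $\lb\w,\nabla_{\zeta}\rb^{\beta}$ via $\lb\w,\nabla_{\zeta}\rb=\lb\nu,\nabla_{\zeta}\rb+\lb\w-\nu,\nabla_{\zeta}\rb$ and using $|\w-\nu|\leq\sqrt\sigma/16$ converts this into the same bound with $\w$ in place of $\nu$ (each factor $\lb\w-\nu,\nabla_{\zeta}\rb$ costs $\sqrt\sigma\cdot\sigma^{1/2}=\sigma$, exactly matching the cost of a factor $\lb\nu,\nabla_{\zeta}\rb$). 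A Fa\`a di Bruno computation using the lower bound from (i) transfers these bounds to $(\wt\theta_{\nu,\sigma})^{-1}$, and multiplying by the analogous bound $|\lb\w,\nabla_{\zeta}\rb^{\beta}\partial_{\zeta}^{\alpha}\theta_{\w,\sigma}|\lesssim\sigma^{-(n-1)/4+|\alpha|/2+\beta}$ from Lemma \ref{lem:packetbounds} via the Leibniz rule gives the claimed derivative estimate on $\eta_{\w,\nu,\sigma}$, so in particular $\eta_{\w,\nu,\sigma}\in C^{\infty}_{c}(\Rn)$. The main obstacle is (i): without the sharper support assumptions introduced at the start of Section \ref{subsec:changes} and the hypothesis $|\w-\nu|\leq\sqrt\sigma/16$, the supports of $\theta_{\w,\sigma}$ and $\wt\theta_{\nu,\sigma}$ could fail to nest in the way needed to control the quotient pointwise.
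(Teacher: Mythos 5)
Your proposal is correct and follows essentially the same route as the paper's proof: the angular support analysis giving $|\hat\zeta-\nu|\leq\frac{3}{8}\sqrt{\sigma}$ and hence the lower bound $\wt{\theta}_{\nu,\sigma}\gtrsim\sigma^{-\frac{n-1}{4}}$ on $\supp(\theta_{\w,\sigma})$, the anisotropic derivative bounds $|\lb\w,\nabla_{\zeta}\rb^{\beta}\partial_{\zeta}^{\alpha}\eta_{\w,\nu,\sigma}|\lesssim\sigma^{\frac{|\alpha|}{2}+\beta}$ for the quotient, and the integration by parts against $L$ with the support volume producing $\sigma^{-\frac{n+1}{2}}$. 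You even supply a detail the paper leaves implicit, namely why the bounds for $\wt{\theta}_{\nu,\sigma}$ hold with $\lb\w,\nabla_{\zeta}\rb$ in place of $\lb\nu,\nabla_{\zeta}\rb$ when $|\w-\nu|\leq\frac{1}{16}\sqrt{\sigma}$.
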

\begin{proof}
We first show that $\wt{\theta}_{\nu,\sigma}$ is uniformly bounded away from zero on $\supp(\theta_{\w,\sigma})$. Note that, by the support condition on $\Psi$, one has $\Psi_{\sigma}\Psi_{\tau}=0$ if $\tau\notin[(\frac{4}{5})^{2}\sigma,(\frac{5}{4})^{2}\sigma]$. Hence
\[
\theta_{\w,\sigma}(\zeta)=\Psi_{\sigma}(\zeta)\int_{0}^{4}\Psi_{\tau}(\zeta)\ph_{\w,\tau}(\zeta)\frac{\ud\tau}{\tau}=\int_{(\frac{4}{5})^{2}\sigma}^{(\frac{5}{4})^{2}\sigma}\Psi_{\sigma}(\zeta)\Psi_{\tau}(\zeta)c_{\tau}\ph\big(\tfrac{\hat{\zeta}-\w}{\sqrt{\tau}}\big)\frac{\ud\tau}{\tau}
\]
for all $\zeta\in\Rn$, where we also used that $(\frac{5}{4})^{2}\sigma<4$. So if $\theta_{\w,\sigma}(\zeta)\neq 0$ then $\ph\big(\tfrac{\hat{\zeta}-\w}{\sqrt{\tau}}\big)\neq 0$ for some $\tau\leq (\tfrac{5}{4})^{2}\sigma$. By the support condition on $\ph$, this implies that $|\hat{\zeta}-\w|\leq \frac{1}{4}\sqrt{\tau}\leq \frac{5}{16}\sqrt{\sigma}$. Since $|\w-\nu|\leq\frac{1}{16}\sqrt{\sigma}$, this in turn yields
\[
|\hat{\zeta}-\nu|\leq |\hat{\zeta}-\w|+|\w-\nu|\leq \tfrac{3}{8}\sqrt{\sigma}\quad(\zeta\in\supp(\theta_{\w,\sigma})).
\]
Hence $\frac{|\hat{\zeta}-\nu|}{\sqrt{\tau}}\leq2\frac{|\hat{\zeta}-\nu|}{\sqrt{\sigma}}\leq \frac{3}{4}$ for all $\tau\geq\frac{1}{4}\sigma$, and the properties of $\wt{\ph}$ now imply that
\[
\wt{\ph}\big(\tfrac{\hat{\zeta}-\nu}{\sqrt{\tau}}\big)=1\quad(\zeta\in\supp(\theta_{\w,\sigma}),\tau\geq \tfrac{1}{4}\sigma).
\]
Next, note that $\wt{\Psi}_{\sigma}\wt{\Psi}_{\tau}=0$ if $\tau\notin[\frac{1}{4}\sigma,4\sigma]$, so that
\begin{align*}
\wt{\theta}_{\nu,\sigma}(\zeta)&=\wt{\Psi}_{\sigma}(\zeta)\int_{0}^{4}\wt{\Psi}_{\tau}(\zeta)\wt{\ph}_{\nu,\tau}(\zeta)\frac{\ud\tau}{\tau}=\int_{\frac{1}{4}\sigma}^{4\sigma}\wt{\Psi}_{\sigma}(\zeta)\wt{\Psi}_{\tau}(\zeta)\wt{c}_{\tau}\wt{\ph}\big(\tfrac{\hat{\zeta}-\nu}{\sqrt{\tau}}\big)\frac{\ud\tau}{\tau}\\
&\geq \int_{\frac{5}{6}\sigma}^{\frac{6}{5}\sigma}\wt{\Psi}_{\sigma}(\zeta)\wt{\Psi}_{\tau}(\zeta)\wt{c}_{\tau}\frac{\ud\tau}{\tau},
\end{align*}
where we also used that $\wt{\Psi}$ is positive. Moreover, for all $\tau\in[\frac{5}{6}\sigma,\frac{6}{5}\sigma]$ and $\zeta\in\supp(\theta_{\w,\sigma})\subseteq\supp(\Psi_{\sigma})$ one has
\[
\tfrac{2}{3}\tau^{-1}\leq \tfrac{4}{5}\sigma^{-1}\leq |\zeta|\leq \tfrac{5}{4}\sigma^{-1}\leq \tfrac{3}{2}\tau^{-1},
\]
so that $\wt{\Psi}_{\tau}(\zeta)=1$. Finally, by \eqref{eq:csigma} with $\ph$ replaced by $\wt{\ph}$, one has
\[
\wt{c}_{\tau}\eqsim \tau^{-\frac{n-1}{4}}.
\]
By combining all this, we find
\begin{equation}\label{eq:thetabound}
\wt{\theta}_{\nu,\sigma}(\zeta)\geq \int_{\frac{5}{6}\sigma}^{\frac{6}{5}\sigma}\wt{\Psi}_{\sigma}(\zeta)\wt{\Psi}_{\tau}(\zeta)\wt{c}_{\tau}\frac{\ud\tau}{\tau}\eqsim \int_{\frac{5}{6}\sigma}^{\frac{6}{5}\sigma}\tau^{-\frac{n-1}{4}}\frac{\ud\tau}{\tau}\eqsim \sigma^{-\frac{n-1}{4}}
\end{equation}
for implicit constants independent of $\w$, $\nu$, $\sigma$ and $\zeta\in\supp(\theta_{\w,\sigma})$. This shows in particular that $\eta_{\w,\nu,\sigma}$ is well defined.

Now, by Lemma \ref{lem:packetbounds} one has
\[
|\lb\w,\nabla_{\zeta}\rb^{\beta}\partial_{\zeta}^{\alpha}\theta_{\w,\sigma}(\zeta)|\lesssim \sigma^{-\frac{n-1}{4}+\frac{|\alpha|}{2}+\beta}
\]
for all $\alpha\in\Z_{+}^{n}$ and $\beta\in\Z_{+}$, and the same bounds hold for $\wt{\theta}_{\nu,\sigma}$ since $|\w-\nu|\leq \frac{1}{16}\sqrt{\sigma}$. We can now combine this with \eqref{eq:thetabound} to show that $\eta_{\w,\nu,\sigma}\in C^{\infty}_{c}(\Rn)$ with 
\[
|\lb\w,\nabla_{\zeta}\rb^{\beta}\partial_{\zeta}^{\alpha}\eta_{\w,\nu,\sigma}(\zeta)|\lesssim \sigma^{\frac{|\alpha|}{2}+\beta}
\]
for an implicit constant independent of $\w$, $\nu$ and $\sigma$. To conclude the proof, one can now integrate by parts as in the proof of Lemma \ref{lem:packetbounds} (see also \cite[Lemma 4.1]{HaPoRo18}), using the support properties of $\theta_{\w,\sigma}$.
\end{proof}

\section{Characterizations of Hardy spaces for FIOs}\label{sec:char}

In this section we prove our main result, Theorem \ref{thm:intro}, which characterizes $\Hp$ for $1<p<\infty$ in terms of $L^{p}(\Sp)$-norms and Fourier multipliers that localize to paraboloids in frequency. Then, using known characterizations of $L^{p}(\Rn)$, we can derive various alternative characterizations of $\Hp$ as corollaries.

\subsection{The main theorem}

Let $\ph\in C^{\infty}_{c}(\Rn)$ be a non-negative function such that $\phi\equiv1$ in a neighborhood of zero, and $\ph(\zeta)=0$ for $|\zeta|>\frac{1}{4}$. Let $\Psi\in C^{\infty}_{c}(\Rn)$ be a non-negative radial function such that $\Psi(\zeta)=0$ if $|\zeta|\notin[\frac{4}{5},\frac{5}{4}]$, and
\[
\int_{0}^{\infty}\Psi(\sigma\zeta)^{2}\frac{\ud \sigma}{\sigma}=1\quad(\zeta\neq0).
\]
For $\w\in S^{n-1}$, define $\ph_{\w}\in C^{\infty}(\Rn)$ as in \eqref{eq:phiomega}:
\[
\ph_{\w}(\zeta):=\int_{0}^{4}\psi_{\w,\tau}(\zeta)\frac{\ud\tau}{\tau}\quad(\zeta\in\Rn),
\]
where $\psi_{\w,\tau}$ is as defined in Section \ref{sec:wavepackets} for $\tau>0$. As noted in Remark \ref{rem:phifunctions}, $\ph_{\w}$ grows polynomially at infinity, and therefore $\ph_{\w}(D)f\in \Sw'(\Rn)$ is well defined for all $f\in\Sw'(\Rn)$. Also, as indicated in Remark \ref{rem:support}, with minor modifications one could allow for more general support assumptions on $\ph$ and $\Psi$ in what follows. Let $q\in C^{\infty}_{c}(\Rn)$ be such that $q(\zeta)=1$ if $|\zeta|\leq 2$.

The following theorem is our main result, already stated in the introduction as Theorem \ref{thm:intro}.

\begin{theorem}\label{thm:chargeneral}
Let $p\in(1,\infty)$. Then there exists a constant $C>0$ such that the following holds for all $f\in\Sw'(\Rn)$. One has $f\in\Hp$ if and only if $q(D)f\in L^{p}(\Rn)$, $\ph_{\w}(D)f\in L^{p}(\Rn)$ for almost all $\w\in S^{n-1}$, and 
\[
\Big(\int_{\Sp}|\ph_{\w}(D)f(x)|^{p}\ud x\ud\w\Big)^{\frac{1}{p}}<\infty.
\]
Moreover, 
\[
\frac{1}{C}\|f\|_{\Hp}\leq \|q(D)f\|_{L^{p}(\Rn)}+\Big(\int_{\Sp}|\ph_{\w}(D)f(x)|^{p}\ud x\ud\w\Big)^{\frac{1}{p}}\leq C\|f\|_{\Hp}
\]
if $f\in\Hp$.
\end{theorem}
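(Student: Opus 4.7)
My plan is to reduce to the equivalent norm given by Corollary~\ref{cor:equivnorm}, namely $\|f\|_{\Hp}\eqsim \|q(D)f\|_{L^{p}(\Rn)}+\|Sf\|_{L^{p}(\Sp)}$, so that it remains to compare $\|Sf\|_{L^{p}(\Sp)}$ to $(\int_{\Sp}|\ph_{\w}(D)f(x)|^{p}\,\ud x\,\ud\w)^{1/p}$ modulo the low-frequency piece. By Fubini the target quantity equals $(\int_{S^{n-1}}\|\ph_{\w}(D)f\|_{L^{p}(\Rn)}^{p}\,\ud\w)^{1/p}$, so I would first study $\|\ph_{\w}(D)f\|_{L^{p}(\Rn)}$ for a fixed $\w\in S^{n-1}$.

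The first main step is to apply the classical parabolic conical square function characterization of $L^{p}(\Rn)$ to $\ph_{\w}(D)f$; since this distribution has frequency support in $\{|\zeta|\geq \tfrac{1}{8}\}$ by Remark~\ref{rem:phifunctions}(\ref{it:function1}), no extra low-frequency term appears, and the identity $\Psi_{\sigma}(D)\ph_{\w}(D)=\theta_{\w,\sigma}(D)$ yields, uniformly in $\w$,
\[
\|\ph_{\w}(D)f\|_{L^{p}(\Rn)}^{p}\eqsim \int_{\Rn}\Big(\int_{0}^{1}\fint_{B_{\sqrt{\sigma}}(x)\subseteq\Rn}|\theta_{\w,\sigma}(D)f(y)|^{2}\,\ud y\,\tfrac{\ud\sigma}{\sigma}\Big)^{p/2}\ud x.
\]
Integrating in $\w$, the right-hand side becomes $\|\wt Sf\|_{L^{p}(\Sp)}^{p}$, where $\wt Sf(x,\w)$ denotes the isotropic-$\Rn$, fixed-direction conical square function built from $\theta_{\w,\sigma}(D)f$. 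The problem thus reduces to the two-sided estimate $\|\wt Sf\|_{L^{p}(\Sp)}\eqsim \|Sf\|_{L^{p}(\Sp)}$.

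The heart of the argument is this transfer between isotropic-$\Rn$ averaging with a fixed direction $\w$ and anisotropic-$\Sp$ averaging over $(y,\nu)$ near $(x,\w)$, which I would carry out via Lemma~\ref{lem:vary}. When $|\w-\nu|\leq \tfrac{1}{16}\sqrt{\sigma}$ the factorization $\theta_{\nu,\sigma}(D)=\eta_{\nu,\w,\sigma}(D)\wt\theta_{\w,\sigma}(D)$ holds with $\F^{-1}(\eta_{\nu,\w,\sigma})$ rapidly decaying off an anisotropic pancake of size $\sqrt{\sigma}\times\sigma$, so Cauchy-Schwarz against its $L^{1}$-normalized version yields the pointwise bound
\[
|\theta_{\nu,\sigma}(D)f(y)|^{2}\lesssim \int_{\Rn}|\F^{-1}(\eta_{\nu,\w,\sigma})(y-z)|\,|\wt\theta_{\w,\sigma}(D)f(z)|^{2}\,\ud z.
\]
To estimate $\|Sf\|_{L^{p}(\Sp)}$ from above, I would integrate this over $(y,\nu)$ in a sub-ball $B_{c\sqrt{\sigma}}(x,\w)$ on which the factorization applies, use Fubini to collapse the anisotropic kernel into an integral over an isotropic $\Rn$-ball of radius $O(\sqrt{\sigma})$ around $x$, and recognize the resulting expression as a dilate of $\wt Sf$; Lemma~\ref{lem:aperture} (together with its $\Rn$-analogue) then absorbs the aperture change, and the norm equivalence \eqref{eq:newnorm} lets me replace $\wt\theta_{\w,\sigma}$ by $\theta_{\w,\sigma}$ at the end. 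The reverse direction $\|\wt Sf\|_{L^{p}(\Sp)}\lesssim \|Sf\|_{L^{p}(\Sp)}$ uses Lemma~\ref{lem:vary} in its original orientation, via the same argument with $\w$ and $\nu$ interchanged.

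The principal obstacle I anticipate is reconciling the scale constraint $|\w-\nu|\leq \tfrac{1}{16}\sqrt{\sigma}$ of Lemma~\ref{lem:vary} with the fact that points $(y,\nu)\in B_{\sqrt{\sigma}}(x,\w)$ only satisfy $|\w-\nu|\lesssim \sqrt{\sigma}$ through the implicit constant in \eqref{eq:metric}, together with the need to track the isotropic/anisotropic geometries through the $L^{p}$ estimates. This is the ``delicate technical estimate'' mentioned in the introduction, and it should be handled by first restricting to a smaller sub-ball $B_{c\sqrt{\sigma}}(x,\w)$ and then recovering the original aperture through Lemma~\ref{lem:aperture}, which is available because $p<\infty$; the assumption $p>1$ is used in the Littlewood--Paley characterization of $L^{p}(\Rn)$ in the first main step. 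Low-frequency contributions appearing on either side are absorbed into $\|q(D)f\|_{L^{p}(\Rn)}$ via the Sobolev embeddings for $\Hp$ from \cite[Theorem 7.4]{HaPoRo18}, as in the proof of Corollary~\ref{cor:equivnorm}.
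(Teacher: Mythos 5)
Your first half is sound in outline: bounding the fixed-direction isotropic square function of $\theta_{\w,\sigma}(D)f$ by the anisotropic one via the factorization of Lemma \ref{lem:vary}, the rapid decay of $\F^{-1}(\eta_{\w,\nu,\sigma})$, an annuli/Fubini argument, Lemma \ref{lem:aperture} and \eqref{eq:newnorm} is exactly how the paper proves the inequality $\|q(D)f\|_{L^p(\Rn)}+(\int_{\Sp}|\ph_\w(D)f|^p)^{1/p}\lesssim\|f\|_{\Hp}$, i.e.\ \eqref{eq:righthand}. But note that the classical conical square function characterization of $L^p(\Rn)$ uses Euclidean balls $B_{\sigma}(x)$ matched to the frequency scale $\sigma^{-1}$ of $\Psi_\sigma(D)$, not $B_{\sqrt\sigma}(x)$ as in your display; relative to the classical aperture this is a change of aperture by the unbounded factor $\sigma^{-1/2}$, and the direction ``square function $\lesssim\|\ph_\w(D)f\|_{L^p}$'' with such balls is not classical and is not uniform in the scale for $p<2$ (aperture-change costs of the type in Lemma \ref{lem:aperture} are sharp). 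So even the starting point of your reduction to $\|\wt Sf\|_{L^p(\Sp)}\eqsim\|Sf\|_{L^p(\Sp)}$ is unjustified as stated.

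The genuine gap is the claim that $\|Sf\|_{L^p(\Sp)}\lesssim\|\wt Sf\|_{L^p(\Sp)}$ follows ``by the same argument with $\w$ and $\nu$ interchanged.'' The two directions are not symmetric. In the proved direction one enlarges the averaging region: the isotropic ball and the translated annuli $y-C_{j,\sigma}$ sit inside a dilated anisotropic ball, cf.\ \eqref{eq:removal}. In your direction, after applying the factorization and collapsing the kernel, the variable $z$ is confined to an anisotropic set of measure $\eqsim(2^{j}\sigma)^{(n+1)/2}$ (a pancake of thickness $\eqsim 2^j\sigma$ along $\w$), not to a full isotropic ball of radius $O(2^{j/2}\sqrt\sigma)$; dominating that anisotropic average by the isotropic average needed to ``recognize a dilate of $\wt Sf$'' costs the volume ratio $\eqsim(2^{j}\sigma)^{-1/2}$, which is unbounded as $\sigma\to0$ and cannot be absorbed. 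This is exactly the obstruction recorded in Remark \ref{rem:proof} (projections of $B_{\sqrt\sigma}(x,\w)$ onto $\Rn$ are fundamentally larger than the balls occurring in the $L^p(\Rn)$ characterization), and it is why the paper does not prove this direction by a direct square-function comparison at all: it proves $\|f\|_{\Hp}\lesssim$ RHS by duality, pairing $(1-q)(D)f$ against $g\in\Sw(\Rn)$ via $U^{*}V=\mathrm{Id}$, inserting the multiplier $m(D)$ built from $(\int_{S^{n-1}}\ph_\nu^2\,\ud\nu)^{-1}$, applying H\"older on $S^{n-1}$ together with the already-proved inequality \eqref{eq:righthand} at exponent $p'$ for $m(D)g$, and concluding with $\Hp=(\Hpp)^{*}$. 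Your proposal contains no substitute for this duality step; moreover, invoking \eqref{eq:newnorm} ``to replace $\wt\theta_{\w,\sigma}$ by $\theta_{\w,\sigma}$'' inside this direction would be circular, since \eqref{eq:newnorm} is an upper bound by $\|f\|_{\Hp}$, the very quantity you are trying to control.
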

\begin{proof}
We first prove that
\begin{equation}\label{eq:righthand}
\|q(D)f\|_{L^{p}(\Rn)}+\Big(\int_{\Sp}|\ph_{\w}(D)f(x)|^{p}\ud x\ud\w\Big)^{\frac{1}{p}}\lesssim \|f\|_{\Hp}
\end{equation}
if $f\in\Hp$, after which we use duality to derive the remaining statements.

Suppose $f\in\Hp$. We first deal with the low frequencies of $f$. For any $m\in C^{\infty}_{c}(\Rn)$ one has $m(D)f\in L^{p}(\Rn)$ and
\begin{equation}\label{eq:lowfreq}
\|m(D)f\|_{L^{p}(\Rn)}\lesssim \|f\|_{\Hp},
\end{equation}
with an implicit constant which depends only on $\supp(m)$ and on $\|\F^{-1}(m)\|_{L^{1}(\Rn)}$. This follows e.g.~from the Sobolev embeddings for $\Hp$ in \cite[Theorem 7.4]{HaPoRo18}:
\[
\|m(D)f\|_{L^{p}(\Rn)}\eqsim \|\lb D\rb^{s_{p}}m(D)f\|_{W^{-s_{p},p}(\Rn)}\lesssim \|f\|_{W^{-s_{p},p}(\Rn)}\lesssim \|f\|_{\Hp},
\]
where $s_{p}=\frac{n-1}{2}|\frac{1}{p}-\frac{1}{2}|$ and where we used that $[\zeta\mapsto \lb \zeta\rb^{\frac{n-1}{2}}m(\zeta)]\in C^{\infty}_{c}(\Rn)$. Applying \eqref{eq:lowfreq} with $m=q$, we obtain that $q(D)f\in L^{p}(\Rn)$ and
\[
\|q(D)f\|_{L^{p}(\Rn)}\lesssim \|f\|_{\Hp}.
\]
Next, for $\w\in S^{n-1}$ we use \eqref{eq:lowfreq} with $m=\ph_{\w}q$:
\[
\|\ph_{\w}(D)q(D)f\|_{L^{p}(\Rn)}\lesssim \|f\|_{\Hp},
\]
where the implicit constant is independent of $\w$ due to the bounds for $\ph_{\w}$ in Remark \ref{rem:phifunctions}. Hence we obtain
\begin{align*}
&\|q(D)f\|_{L^{p}(\Rn)}+\Big(\int_{S^{*}(\Rn)}|\ph_{\w}(D)q(D)f(x)|^{p}\ud x\ud\w\Big)^{1/p}\\
&\lesssim\|q(D)f\|_{L^{p}(\Rn)}+\sup_{\w\in S^{n-1}}\|\ph_{\w}(D)q(D)f\|_{L^{p}(\Rn))}\lesssim\|f\|_{\Hp}.
\end{align*}

Now, for \eqref{eq:righthand} it remains to consider the high frequencies of $f$, captured by $(1-q)(D)f$. We have to show that
\[
\Big(\int_{S^{n-1}}\|\ph_{\w}(D)(1-q)(D)f\|_{L^{p}(\Rn)}^{p}\ud\w\Big)^{1/p}\lesssim\|f\|_{\Hp}.
\]
To prove this we will in turn use the conical square function characterization of $L^{p}(\Rn)$ (see \cite[Section I.8.C]{Stein93}): for $g\in\Sw'(\Rn)$ one has $g\in L^{p}(\Rn)$ if and only if
\[
\Big(\int_{\Rn}\Big(\int_{0}^{\infty}\fint_{B_{\sigma}(x)}|\Psi_{\sigma}(D)g(y)|^{2}\ud y\frac{\ud\sigma}{\sigma}\Big)^{p/2}\ud x\Big)^{1/p}<\infty,
\]
in which case $\|g\|_{L^{p}(\Rn)}$ is comparable to this quantity. Here $B_{\sigma}(x)\subseteq\Rn$ is the Euclidean ball of radius $\sigma>0$ around $x\in\Rn$. Using this characterization with $g=\ph_{\w}(D)\wt{f}$ for $\wt{f}:=(1-q)(D)f$ and $w\in S^{n-1}$, we see that for \eqref{eq:righthand} it suffices to show that
\begin{equation}\label{eq:1-qterm}
\Big(\int_{\Sp}\Big(\int_{0}^{\infty}\fint_{B_{\sigma}(x)}|\Psi_{\sigma}(D)\ph_{\w}(D)\wt{f}(y)|^{2}\ud y\frac{\ud\sigma}{\sigma}\Big)^{\frac{p}{2}}\ud x\ud\w\Big)^{\frac{1}{p}}\lesssim \|f\|_{\Hp}.
\end{equation}
To prove \eqref{eq:1-qterm} we have to replace the averages over the isotropic balls $B_{\sigma}(x)$ in $\Rn$ by averages over anisotropic balls $B_{\sqrt{\sigma}}(x,\w)$ in $\Sp$. To do so, we will make a suitable change of wave packets, and then we decompose $\Rn$ into anisotropic annuli on which the inverse Fourier transforms of these new wave packets decay rapidly. 

Fix $(x,\w)\in \Sp$. We first do some preliminary work. For $\nu\in B_{\sqrt{\sigma}/16}(\w)$ and $\sigma\in(0,1)$, set 
\[
\eta_{\w,\nu,\sigma}(\zeta):=\begin{cases}
\frac{\theta_{\w,\sigma}(\zeta)}{\tilde{\theta}_{\nu,\sigma}(\zeta)}&\text{for }\zeta\in\supp(\theta_{\w,\sigma}),\\
0&\text{otherwise}.
\end{cases}
\]
By Lemma \ref{lem:vary}, one has $\eta_{\w,\nu,\sigma}\in C^{\infty}_{c}(\Rn)$. Next, let 
\begin{equation}\label{eq:jzero}
C_{0,\sigma}:=\{z\in\Rn\mid |z|^{2}+|\lb \w,z\rb|\leq \sigma\} 
\end{equation}
and, for $j\in\N$,  
\[
C_{j,\sigma}:=\{z\in \Rn\mid 2^{j-1}\sigma<|z|^{2}+|\lb \w,z\rb|\leq 2^{j}\sigma\}.
\]
It follows from geometric considerations that
\begin{equation}\label{eq:volume1}
|C_{j,\sigma}|\eqsim (2^{j}\sigma)^{\frac{n+1}{2}}
\end{equation}
for all $j\in\Z_{+}$ such that $2^{j}\sigma\leq1$, and 
\begin{equation}\label{eq:volume2}
|C_{j,\sigma}|\eqsim (2^{j}\sigma)^{\frac{n}{2}}
\end{equation}
if $2^{j}\sigma>1$, for implicit constants independent of $\w$, $\sigma$ and $j$. Indeed, \eqref{eq:volume1} follows for $j\in\N$ by noting, for example, that
\begin{align*}
&\{z\in\Rn\mid 2^{j-1}\sigma\leq |\lb\w,z\rb|\leq \tfrac{6}{5}2^{j-1}\sigma,|z-\lb\w,z\rb|\leq \tfrac{1}{5}(2^{j}\sigma)^{\frac{1}{2}}\}\\
&\subseteq C_{j,\sigma}\subseteq\{z\in\Rn\mid |\lb\w,z\rb|\leq 2^{j}\sigma,|z-\lb\w,z\rb|\leq (2^{j}\sigma)^{\frac{1}{2}}\},
\end{align*}
and similarly for $j=0$ and $2^{j}\sigma>1$.

Now, to bound the left-hand side of \eqref{eq:1-qterm}, first note that for $\sigma\geq 1$ one has $\Psi_{\sigma}(D)\wt{f}=\Psi_{\sigma}(D)(1-q)(D)f=0$. Hence 
\begin{align*}
&\Big(\int_{0}^{\infty}\fint_{B_{\sigma}(x)}|\Psi_{\sigma}(D)\ph_{\w}(D)\wt{f}(y)|^{2}\ud y\frac{\ud\sigma}{\sigma}\Big)^{\frac{1}{2}}=\Big(\int_{0}^{\infty}\fint_{B_{\sigma}(x)}|\theta_{\w,\sigma}(D)\wt{f}(y)|^{2}\ud y\frac{\ud\sigma}{\sigma}\Big)^{\frac{1}{2}}\\
&=\Big(\int_{0}^{1}\fint_{B_{\sigma}(x)}\fint_{B_{\sqrt{\sigma}/16}(\w)}|\eta_{\w,\nu,\sigma}(D)\wt{\theta}_{\nu,\sigma}(D)\wt{f}(y)|^{2}\ud\nu\ud y\frac{\ud\sigma}{\sigma}\Big)^{\frac{1}{2}}\\
&=\Big(\int_{0}^{1}\fint_{B_{\sqrt{\sigma}/16}(\w)}\fint_{B_{\sigma}(x)}\Big|\sum_{j=0}^{\infty}\int_{C_{j,\sigma}}\F^{-1}(\eta_{\w,\nu,\sigma})(z)\wt{\theta}_{\nu,\sigma}(D)\wt{f}(y-z)\ud z\Big|^{2}\ud y\ud\nu\frac{\ud\sigma}{\sigma}\Big)^{\frac{1}{2}}\\
&\leq \sum_{j=0}^{\infty}\!\Big(\int_{0}^{1}\fint_{B_{\sqrt{\sigma}/16}(\w)}\fint_{B_{\sigma}(x)}\!\Big(\!\int_{C_{j,\sigma}}\!|\F^{-1}(\eta_{\w,\nu,\sigma})(z)\wt{\theta}_{\nu,\sigma}(D)\wt{f}(y-z)|\ud z\!\Big)^{2}\ud y\ud\nu\frac{\ud\sigma}{\sigma}\Big)^{\frac{1}{2}}\!.
\end{align*}
Next, we will bound each of the terms in this series separately. 

Fix $j\in\Z_{+}$ and note that for each $N\geq0$, Lemma \ref{lem:vary}, \eqref{eq:volume1} and \eqref{eq:volume2} yield
\begin{align*}
|\F^{-1}(\eta_{\w,\nu,\sigma})(z)|&\lesssim \sigma^{-\frac{n+1}{2}}\big(1+\sigma^{-1}|z|^{2}+\sigma^{-2}\lb\w,z\rb^{2}\big)^{-2(N+\frac{n+1}{2})}\\
&\lesssim \sigma^{-\frac{n+1}{2}}\big(1+\sigma^{-1}(|z|^{2}+|\lb\w,z\rb|)\big)^{-(N+\frac{n+1}{2})}\\
&\lesssim 2^{-jN}(2^{j}\sigma)^{-\frac{n+1}{2}}\lesssim 2^{-jN}|C_{j,\sigma}|^{-1}
\end{align*}
for all $\sigma\in(0,1)$ and $z\in C_{j,\sigma}$, where we also used that
\begin{align*}
&\big(1+\sigma^{-1}|z|^{2}+\sigma^{-2}\lb\w,z\rb^{2}\big)^{-2}\leq \big(1+\sigma^{-1}|z|^{2}\big)^{-1}\big(1+\sigma^{-2}\lb\w,z\rb^{2}\big)^{-1}\\
&\lesssim \big(1+\sigma^{-1}|z|^{2}\big)^{-1}\big(1+\sigma^{-1}|\lb\w,z\rb|\big)^{-2}\leq \big(1+\sigma^{-1}(|z|^{2}+|\lb\w,z\rb|)\big)^{-1}.
\end{align*}
For later use we fix $N>\frac{n-1}{4}+\frac{n}{2}$. Now the Cauchy-Schwarz inequality yields
\begin{equation}\label{eq:bigone}
\begin{aligned}
&\int_{0}^{1}\fint_{B_{\sqrt{\sigma}/16}(\w)}\fint_{B_{\sigma}(x)}\Big(\int_{C_{j,\sigma}}|\F^{-1}(\eta_{\w,\nu,\sigma})(z)\wt{\theta}_{\nu,\sigma}(D)\wt{f}(y-z)|\ud z\Big)^{2}\ud y\ud\nu\frac{\ud\sigma}{\sigma}\\
&\lesssim 2^{-2jN}\int_{0}^{1}\fint_{B_{\sqrt{\sigma}/16}(\w)}\fint_{B_{\sigma}(x)}\Big(\fint_{C_{j,\sigma}}|\wt{\theta}_{\nu,\sigma}(D)\wt{f}(y-z)|\ud z\Big)^{2}\ud y\ud\nu\frac{\ud\sigma}{\sigma}\\
&\lesssim 2^{-2jN}\int_{0}^{1}\fint_{B_{\sqrt{\sigma}/16}(\w)}\fint_{B_{\sigma}(x)}\fint_{y-C_{j,\sigma}}|\wt{\theta}_{\nu,\sigma}(D)\wt{f}(z)|^{2}\ud z\ud y\ud\nu\frac{\ud\sigma}{\sigma}.
\end{aligned}
\end{equation}
At this point we are able to remove the average over the isotropic ball $B_{\sigma}(x)$, and instead work with an average over a larger anisotropic ball. In fact, we will replace the integrals over $B_{\sqrt{\sigma}/16}(\w)$, $B_{\sigma}(x)$ and $y-C_{j,\sigma}$ by a single integral over a ball $B_{c2^{j/2}\sqrt{\sigma}}(x,\w)\subseteq\Sp$. This will come at the cost of a factor $2^{j\frac{n-1}{2}}$, but such a factor does not pose a problem given our choice of $N$.

Note that for all $\sigma\in(0,1)$ and $y,z\in\Rn$ such that $y\in B_{\sigma}(x)$ and $z\in y-C_{j,\sigma}$, one has
\begin{align*}
&|\lb \w,x-z\rb|+|x-z|^{2}\leq |\lb\w,x-y\rb|+|\lb\w,y-z\rb|+(|x-y|+|y-z|)^{2}\\
&\leq \sigma+|\lb\w,y-z\rb|+|y-z|^{2}+\sigma^{2}+2\sigma|y-z|\leq \sigma+2^{j}\sigma+\sigma+2\sigma2^{j/2}\sqrt{\sigma}\leq 5\cdot 2^{j}\sigma.
\end{align*}
Hence $z-x\in C_{0,2^{j}5\sigma}$ and $z\in x+C_{0,2^{j}5\sigma}$, where $C_{0,2^{j}5\sigma}$ is as in \eqref{eq:jzero} with $\sigma$ replaced by $2^{j}5\sigma$. It is straightforward to check, using \eqref{eq:volume1} and \eqref{eq:volume2}, that
\begin{equation}\label{eq:volume3}
|C_{0,2^{j}5\sigma}|\eqsim |C_{j,\sigma}|
\end{equation}
with implicit constants independent of $\w$, $\sigma$ and $j$. It now follows that
\begin{equation}\label{eq:removal}
\begin{aligned}
&\fint_{B_{\sigma}(x)}\fint_{y-C_{j,\sigma}}|\wt{\theta}_{\nu,\sigma}(D)\wt{f}(z)|^{2}\ud z\ud y\\
&=\int_{\Rn}\int_{\Rn}\ind_{B_{\sigma}(x)}(y)\ind_{y-C_{j,\sigma}}(z)|\wt{\theta}_{\nu,\sigma}(D)\wt{f}(z)|^{2}\frac{\ud z}{|C_{j,\sigma}|}\frac{\ud y}{|B_{\sigma}(x)|}\\
&\leq \int_{\Rn}\int_{\Rn}\ind_{B_{\sigma}(x)}(y)\ind_{x+C_{0,2^{j}5\sigma}}(z)|\wt{\theta}_{\nu,\sigma}(D)\wt{f}(z)|^{2}\frac{\ud y}{|B_{\sigma}(x)|}\frac{\ud z}{|C_{j,\sigma}|}\\
&\eqsim \int_{\Rn}\ind_{x+C_{0,2^{j}5\sigma}}(z)|\wt{\theta}_{\nu,\sigma}(D)\wt{f}(z)|^{2}\frac{\ud z}{|C_{0,2^{j}5\sigma}|}.
\end{aligned}
\end{equation}
Moreover, by \eqref{eq:metric}, there exists a constant $c=c(n)\geq1$ such that 
\[
(x+C_{0,2^{j}5\sigma})\times B_{\sqrt{\sigma}/16}(\w)\subseteq B_{c2^{j/2}\sqrt{\sigma}}(x,\w).
\]
Combining \eqref{eq:metric}, \eqref{eq:volume1}, \eqref{eq:volume2} and \eqref{eq:volume3}, we also obtain
\[
|B_{c2^{j/2}\sqrt{\sigma}}(x,\w)|\lesssim 2^{j\frac{n-1}{2}}|C_{0,2^{j}5\sigma}|\,|B_{\sqrt{\sigma}/16}(\w)|.
\]
We can now write
\begin{align*}
&\fint_{B_{\sqrt{\sigma}/16}(\w)}\fint_{B_{\sigma}(x)}\fint_{y-C_{j,\sigma}}|\wt{\theta}_{\nu,\sigma}(D)\wt{f}(z)|^{2}\ud z\ud y\ud\nu\\
&\lesssim \int_{B_{\sqrt{\sigma}/16}(\w)}\int_{\Rn}\ind_{x+C_{0,2^{j}5\sigma}}(z)|\wt{\theta}_{\nu,\sigma}(D)\wt{f}(z)|^{2}\frac{\ud z}{|C_{0,2^{j}5\sigma}|}\frac{\ud\nu}{|B_{\sqrt{\sigma}/16}(\w)|}\\
&\lesssim 2^{j\frac{n-1}{2}}\int_{S^{*}(\Rn)}\ind_{(x+C_{0,2^{j}5\sigma})\times B_{\sqrt{\sigma}/16}(\w)}(z,\nu)|\wt{\theta}_{\nu,\sigma}(D)\wt{f}(z)|^{2}\frac{\ud z\ud\nu}{|B_{c2^{j/2}\sqrt{\sigma}}(x,\w)|}\\
&\leq 2^{j\frac{n-1}{2}}\fint_{B_{c2^{j/2}\sqrt{\sigma}}(x,\w)}|\wt{\theta}_{\nu,\sigma}(D)\wt{f}(z)|^{2}\ud z\ud\nu.
\end{align*}
By plugging this into \eqref{eq:bigone}, we obtain
\begin{align*}
&\int_{0}^{1}\fint_{B_{\sqrt{\sigma}/16}(\w)}\fint_{B_{\sigma}(x)}\Big(\int_{C_{j,\sigma}}|\F^{-1}(\eta_{\w,\nu,\sigma})(z)\wt{\theta}_{\nu,\sigma}(D)\wt{f}(y-z)|\ud z\Big)^{2}\ud y\ud\nu\frac{\ud\sigma}{\sigma}\\
&\lesssim 2^{-2jN}\int_{0}^{1}\fint_{B_{\sqrt{\sigma}/16}(\w)}\fint_{B_{\sigma}(x)}\fint_{y-C_{j,\sigma}}|\wt{\theta}_{\nu,\sigma}(D)\wt{f}(z)|^{2}\ud z\ud y\ud\nu\frac{\ud\sigma}{\sigma}\\
&\lesssim 2^{-2j(N-\frac{n-1}{4})}\int_{0}^{1}\fint_{B_{c2^{j/2}\sqrt{\sigma}}(x,\w)}|\wt{\theta}_{\nu,\sigma}(D)\wt{f}(z)|^{2}\ud z\ud\nu\frac{\ud\sigma}{\sigma},
\end{align*}
where the implicit constant is independent of $x$, $\w$ and $j$.

Next, we combine what we have shown:
\begin{align*}
&\Big(\int_{0}^{\infty}\fint_{B_{\sigma}(x)}|\Psi_{\sigma}(D)\ph_{\w}(D)\wt{f}(y)|^{2}\ud y\frac{\ud\sigma}{\sigma}\Big)^{\frac{1}{2}}\\
&\lesssim \sum_{j=0}^{\infty}\!\Big(\int_{0}^{1}\fint_{B_{\sqrt{\sigma}/16}(\w)}\fint_{B_{\sigma}(x)}\!\Big(\!\int_{C_{j,\sigma}}\!|\F^{-1}(\eta_{\w,\nu,\sigma})(z)\wt{\theta}_{\nu,\sigma}(D)\wt{f}(y-z)|\ud z\!\Big)^{2}\ud y\ud\nu\frac{\ud\sigma}{\sigma}\Big)^{\frac{1}{2}}\\
&\lesssim \sum_{j=0}^{\infty}2^{-j(N-\frac{n-1}{4})}\Big(\int_{0}^{1}\fint_{B_{c2^{j/2}\sqrt{\sigma}}(x,\w)}|\wt{\theta}_{\nu,\sigma}(D)\wt{f}(z)|^{2}\ud z\ud\nu\frac{\ud\sigma}{\sigma}\Big)^{\frac{1}{2}}.
\end{align*}
Finally, to conclude the proof of \eqref{eq:1-qterm}, and thereby also of \eqref{eq:righthand}, we take $L^{p}(\Sp)$ norms. Then we use the change of aperture formula in Lemma \ref{lem:aperture}, we recall that $N>\frac{n-1}{4}+\frac{n}{2}$, and we apply \eqref{eq:newnorm}:
\begin{align*}
&\Big(\int_{\Sp}\Big(\int_{0}^{\infty}\fint_{B_{\sigma}(x)}|\Psi_{\sigma}(D)\ph_{\w}(D)\wt{f}(y)|^{2}\ud y\frac{\ud\sigma}{\sigma}\Big)^{\frac{p}{2}}\ud x\ud\w\Big)^{\frac{1}{p}}\\
&\lesssim \sum_{j=0}^{\infty}2^{-j(N-\frac{n-1}{4})}\Big(\int_{\Sp}\Big(\int_{0}^{1}\fint_{B_{c2^{j/2}\sqrt{\sigma}}(x,\w)}|\wt{\theta}_{\nu,\sigma}(D)\wt{f}(z)|^{2}\ud z\ud\nu\frac{\ud\sigma}{\sigma}\Big)^{\frac{p}{2}}\ud x\ud\w\Big)^{\frac{1}{p}}\\
&\lesssim \sum_{j=0}^{\infty}2^{-j(N-\frac{n-1}{4}-\frac{n}{2})}\Big(\int_{\Sp}\Big(\int_{0}^{1}\fint_{B_{\sqrt{\sigma}}(x,\w)}|\wt{\theta}_{\nu,\sigma}(D)\wt{f}(z)|^{2}\ud z\ud\nu\frac{\ud\sigma}{\sigma}\Big)^{\frac{p}{2}}\ud x\ud\w\Big)^{\frac{1}{p}}\\
&\lesssim \Big(\int_{\Sp}\Big(\int_{0}^{1}\fint_{B_{\sqrt{\sigma}}(x,\w)}|\wt{\theta}_{\nu,\sigma}(D)\wt{f}(z)|^{2}\ud z\ud\nu\frac{\ud\sigma}{\sigma}\Big)^{\frac{p}{2}}\ud x\ud\w\Big)^{\frac{1}{p}}\\
&\lesssim  \|\wt{f}\|_{\Hp}=\|(1-q)(D)f\|_{\Hp}\lesssim \|f\|_{\Hp},
\end{align*}
where in the final step we also used that $(1-q)(D)\in \La(\Hp)$, by \cite[Theorem 6.10]{HaPoRo18}. We have now proved half of the required statements.

For the remaining statements, we have to show that if $f\in\Sw'(\Rn)$ satisfies $q(D)f\in L^{p}(\Rn)$, $\ph_{\w}(D)f\in L^{p}(\Rn)$ for almost all $\w\in S^{n-1}$, and 
\[
\Big(\int_{\Sp}|\ph_{\w}(D)f(x)|^{p}\ud x\ud\w\Big)^{\frac{1}{p}}<\infty,
\]
then $f\in\Hp$ with 
\[
\|f\|_{\Hp}\lesssim\|q(D)f\|_{L^{p}(\Rn)}+\Big(\int_{\Sp}|\ph_{\w}(D)f(x)|^{p}\ud x\ud\w\Big)^{\frac{1}{p}}. 
\]
To this end, first note that the Sobolev embeddings for $\Hp$ from \cite[Theorem 7.4]{HaPoRo18} yield $q(D)f\in \Hp$, with
\[
\|q(D)f\|_{\Hp}\lesssim \|\lb D\rb^{\frac{n-1}{2}|\frac{1}{p}-\frac{1}{2}|}q(D)f\|_{L^{p}(\Rn)}\lesssim \|q(D)f\|_{L^{p}(\Rn)},
\]
where we also used that $q\in C^{\infty}_{c}(\Rn)$. 

To show that also $(1-q)(D)f\in\Hp$, we will use \eqref{eq:righthand} and duality. In particular, we will show that
\begin{equation}\label{eq:dualityRn}
|\lb (1-q)(D)f,g\rb_{\Rn}|\lesssim \Big(\int_{S^{n-1}}\|\ph_{\w}(D)(1-q)(D)f\|^{p}_{L^{p}(\Rn)}\ud\w\Big)^{\frac{1}{p}}\|g\|_{\HT^{p'}_{FIO}(\Rn)}
\end{equation}
for all $g\in\Sw(\Rn)$, with an implicit constant that is independent of $f$ and $g$. Here $\lb (1-q)(D)f,g\rb_{\Rn}$ denotes the duality between $(1-q)(D)f\in\Sw'(\Rn)$ and $g\in\Sw(\Rn)$. Since $\Sw(\Rn)\subseteq\HT^{p'}_{FIO}(\Rn)$ is dense (see \cite[Proposition 6.6]{HaPoRo18}), this then shows that $(1-q)(D)f\in (\Hpp)^{*}$ with
\begin{align*}
\|(1-q)(D)f\|_{(\Hpp)^{*}}&\lesssim \Big(\int_{S^{n-1}}\|\ph_{\w}(D)(1-q)(D)f\|^{p}_{L^{p}(\Rn)}\ud\w\Big)^{\frac{1}{p}}\\
&\lesssim\Big(\int_{\Sp}|\ph_{\w}(D)f(x)|^{p}\ud x\ud\w\Big)^{\frac{1}{p}},
\end{align*}
where in the final step we also used that $(1-q)(D)\in \La(L^{p}(\Rn))$, since $q\in C^{\infty}_{c}(\Rn)$. This will suffice to complete the proof, because $\Hp=(\Hpp)^{*}$ with equivalent norms, by \cite[Proposition 6.8]{HaPoRo18}.

Let $\tilde{q}\in C^{\infty}_{c}(\Rn)$ be real-valued and such that $\tilde{q}(\zeta)=0$ for $|\zeta|\geq 2$ and $\tilde{q}(\zeta)=1$ for $|\zeta|\leq \frac{1}{2}$. Set
\[
m(\zeta):=\begin{cases}
(1-\tilde{q}(\zeta))\big(\int_{S^{n-1}}\phi_{\nu}(\zeta)^{2}\ud\nu\big)^{-1}&\text{for }\zeta\in\supp(1-\wt{q}),\\
0&\text{otherwise}.
\end{cases}
\]
It then follows from \eqref{eq:boundbelow} that $m\in C^{\infty}(\Rn)$ with
\begin{equation}\label{eq:boundsm}
|\partial_{\zeta}^{\alpha}m(\zeta)|\lesssim (1+|\zeta|)^{-|\alpha|}
\end{equation}
for all $\alpha\in\Z_{+}^{n}$ and $\zeta\in\Rn$. Also note that $(1-q)(D)=(1-q)(D)(1-\tilde{q})(D)$. 

Now let $g\in\Sw(\Rn)$. To prove \eqref{eq:dualityRn}, we will in fact work with the duality between $V(1-q)(D)f$ and $Ug$ from \eqref{eq:duality}, where we note that $Ug\in \Da(\Spp)$ by Lemma \ref{lem:transforms}. More precisely, since $\Psi_{\sigma}(D)(1-q)(D)f=0$ for $\sigma\geq 1$, \eqref{eq:repro} yields
\begin{align*}
&\lb (1-q)(D)f,g\rb_{\Rn}=\lb V(1-q)(D)f,Ug\rb_{\Spp}\\
&=\int_{0}^{1}\int_{S^{n-1}}\int_{\Rn}\theta_{\w,\sigma}(D)(1-q)(D)f(x)\overline{\chi_{\w,\sigma}(D)g(x)}\ud x\ud\w\frac{\ud\sigma}{\sigma}.
\end{align*}
Next, note that
\begin{align*}
&\int_{0}^{1}\int_{S^{n-1}}\int_{\Rn}\theta_{\w,\sigma}(D)(1-q)(D)f(x)\overline{\chi_{\w,\sigma}(D)g(x)}\ud x\ud\w\frac{\ud\sigma}{\sigma}\\
&=\int_{0}^{1}\int_{S^{n-1}}\int_{\Rn}\theta_{\w,\sigma}(D)(1-q)(D)f(x)\overline{(1-\tilde{q})(D)\chi_{\w,\sigma}(D)g(x)}\ud x\ud\w\frac{\ud\sigma}{\sigma}\\
&=\int_{0}^{1}\int_{S^{n-1}}\int_{\Rn}\theta_{\w,\sigma}(D)(1-q)(D)f(x)\overline{\theta_{\w,\sigma}(D)m(D)g(x)}\ud x\ud\w\frac{\ud\sigma}{\sigma}\\
&=\int_{S^{n-1}}\int_{0}^{1}\int_{\Rn}\Psi_{\sigma}(D)\ph_{\w}(D)(1-q)(D)f(x)\overline{\Psi_{\sigma}(D)\ph_{\w}(D)m(D)g(x)}\ud x\frac{\ud\sigma}{\sigma}\ud\w\\
&=\int_{S^{n-1}}\lb\ph_{\w}(D)(1-q)(D)f,\ph_{\w}(D)m(D)g\rb_{\Rn}\ud\w,
\end{align*} 
where for the final step we used \eqref{eq:Psi}.
Moreover, it follows from \eqref{eq:boundsm} and \cite[Theorem 6.10]{HaPoRo18} that $m(D)g\in\HT^{p'}_{FIO}(\Rn)$ with $\|m(D)g\|_{\HT^{p'}_{FIO}(\Rn)}\lesssim \|g\|_{\HT^{p'}_{FIO}(\Rn)}$. Hence, applying \eqref{eq:righthand} to $m(D)g$, with $p$ replaced by $p'$, we see that
\begin{align*}
&|\lb (1-q)(D)f,g\rb_{\Rn}|=\Big|\int_{0}^{1}\int_{S^{n-1}}\int_{\Rn}\theta_{\w,\sigma}(D)(1-q)(D)f(x)\overline{\chi_{\w,\sigma}(D)g(x)}\ud x\ud\w\frac{\ud\sigma}{\sigma}\Big|\\
&\leq \int_{S^{n-1}}|\lb\ph_{\w}(D)(1-q)(D)f,\ph_{\w}(D)m(D)g\rb_{\Rn}|\ud\w\\
&\leq \int_{S^{n-1}}\|\ph_{\w}(D)(1-q)(D)f\|_{L^{p}(\Rn)}\|\ph_{\w}(D)m(D)g\|_{L^{p'}(\Rn)}\ud\w\\
&\leq \Big(\int_{S^{n-1}}\|\ph_{\w}(D)(1-q)(D)f\|^{p}_{L^{p}(\Rn)}\ud\w\Big)^{\frac{1}{p}}\Big(\int_{S^{n-1}}\|\ph_{\w}(D)m(D)g\|_{L^{p'}(\Rn)}^{p'}\ud\w\Big)^{\frac{1}{p'}}\\
&\lesssim \Big(\int_{S^{n-1}}\|\ph_{\w}(D)(1-q)(D)f\|^{p}_{L^{p}(\Rn)}\ud\w\Big)^{\frac{1}{p}}\|m(D)g\|_{\HT^{p'}_{FIO}(\Rn)}\\
&\lesssim \Big(\int_{S^{n-1}}\|\ph_{\w}(D)(1-q)(D)f\|^{p}_{L^{p}(\Rn)}\ud\w\Big)^{\frac{1}{p}}\|g\|_{\HT^{p'}_{FIO}(\Rn)}.
\end{align*}
This concludes the proof of \eqref{eq:dualityRn} and of the theorem.
\end{proof}

\begin{remark}\label{rem:alternateproof}
For $p\in(1,2]$, an alternative proof of \eqref{eq:righthand} can be obtained by combining the vertical square function characterization of $L^{p}(\Rn)$ (see \eqref{eq:vertical} below) with bounds for vertical square functions in terms of conical ones (see \cite[Proposition 2.1 and Remark 2.2]{AuHoMa12} or \cite[Equation (2.9)]{HaPoRo18}):
\begin{align*}
&\|q(D)f\|_{L^{p}(\Rn)}+\Big(\int_{S^{n-1}}\|\ph_{\w}(D)f\|_{L^{p}(\Rn)}^{p}\ud\w\Big)^{1/p}\\
&\lesssim \|q(D)f\|_{L^{p}(\Rn)}+\Big(\int_{S^{n-1}}\int_{\Rn}\Big(\int_{0}^{1}|\Psi_{\sigma}(D)\ph_{\w}(D)f(x)|^{2}\frac{\ud\sigma}{\sigma}\Big)^{p/2}\ud x\ud\w\Big)^{1/p}\\
&\lesssim \|q(D)f\|_{L^{p}(\Rn)}+\Big(\int_{\Sp}\Big(\int_{0}^{1}\fint_{B_{\sqrt{\sigma}}(x,\w)}|\theta_{\nu,\sigma}(D)f(y)|^{2}\ud y\ud\nu\frac{\ud\sigma}{\sigma}\Big)^{p/2}\ud x\ud\w\Big)^{1/p}\\
&\lesssim \|f\|_{\Hp}.
\end{align*}
Here we also used \eqref{eq:lowfreq} with $m=\ph_{\w}q$, and in the final step we used that $q\in C^{\infty}_{c}(\Rn)$ and we applied the Sobolev embeddings from \cite[Theorem 7.4]{HaPoRo18}. The same argument in fact works for $p=1$ with the classical Hardy space $H^{1}(\Rn)$, and it shows that
\begin{equation}\label{eq:1version}
\|q(D)f\|_{L^{1}(\Rn)}+\int_{\Sp}\|\ph_{\w}(D)f\|_{H^{1}(\Rn)}\ud\w\lesssim \|f\|_{\HT^{1}_{FIO}(\Rn)}.
\end{equation}
However, this argument does not work for $p>2$, nor can one obtain the reverse inequality in \eqref{eq:righthand} in this manner for $p<2$.
\end{remark}

\begin{remark}\label{rem:proof}
One of the main technical difficulties in the proof of Theorem \ref{thm:chargeneral} is to move from averages over isotropic balls $B_{\sigma}(x)\subseteq \Rn$, which arise from the conical square function characterization of $L^{p}(\Rn)$ in \eqref{eq:1-qterm}, to averages over the anisotropic balls $B_{\sqrt{\sigma}}(x,\w)\subseteq\Sp$ that occur in the $\Hp$-norm. Our proof of \eqref{eq:righthand} works because the projection of a ball $B_{\sqrt{\sigma}}(x,\w)$ onto $\Rn$ is no smaller, up to constants independent of $x$, $\w$, and $\sigma$, than the ball $B_{\sigma}(x)$; this fact is crucial in \eqref{eq:removal}. 

When attempting to prove the reverse inequality in \eqref{eq:1-qterm} directly, one has to replace the anisotropic balls which arise by projecting $B_{\sqrt{\sigma}}(x,\w)$ onto $\Rn$ by the fundamentally smaller balls $B_{\sigma}(x)$. This appears to be problematic, hence for $p\in(1,\infty)$ we instead used duality. One could attempt to prove the reverse inequality in \eqref{eq:1version} in a similar way, by proving its dual inequality
\begin{equation}\label{eq:inftyversion}
\|q(D)f\|_{L^{\infty}(\Rn)}+\esssup_{(x,\w)\in\Sp}\|\ph_{\w}(D)f\|_{\BMO(\Rn)}\lesssim \|f\|_{\HT^{\infty}_{FIO}(\Rn)}.
\end{equation}
However, the conical square function characterization of $\BMO(\Rn)$ involves a norm, the $T^{\infty}(\Rn)$ norm, that is similar to that in \eqref{eq:inftynorm} and of a different nature than the conical square function norm that one encounters for $p<\infty$. The arguments that we used for $p<\infty$ do not appear to extend directly to the case where $p=\infty$. We do not know whether \eqref{eq:inftyversion} or the reverse inequality in \eqref{eq:1version} hold.
\end{remark}

\subsection{Additional results}

By combining Theorem \ref{thm:chargeneral} with known characterizations of $L^{p}(\Rn)$, one obtains various characterizations of $\Hp$ for $1<p<\infty$. We give just two here: a maximal function characterization and one in terms of vertical square functions. 

For the maximal function characterization, let $\Phi\in \Sw(\Rn)$ be such that $\Phi(0)=1$, and for $\sigma>0$ and $\zeta\in\Rn$ set $\Phi_{\sigma}(\zeta)=\Phi(\sigma\zeta)$ as before. Recall that $f\in\Sw'(\Rn)$ is a \emph{bounded distribution} if $f\ast g\in L^{\infty}(\Rn)$ for all $g\in\Sw(\Rn)$.

\begin{corollary}\label{cor:maximal}
Let $p\in(1,\infty)$. Then there exists a constant $C>0$ such that the following holds for all $f\in\Sw'(\Rn)$. One has $f\in\Hp$ if and only if $q(D)f\in L^{p}(\Rn)$, $\ph_{\w}(D)f$ is a bounded distribution for almost all $\w\in S^{n-1}$, and 
\[
\Big(\int_{\Sp}\sup_{\sigma>0}|\Phi_{\sigma}(D)\ph_{\w}(D)f(x)|^{p}\ud x\ud\w\Big)^{\frac{1}{p}}<\infty.
\]
Moreover, if $f\in\Hp$ then
\begin{align*}
\frac{1}{C}\|f\|_{\Hp}&\leq \|q(D)f\|_{L^{p}(\Rn)}+\Big(\int_{\Sp}\sup_{\sigma>0}|\Phi_{\sigma}(D)\ph_{\w}(D)f(x)|^{p}\ud x\ud\w\Big)^{\frac{1}{p}}\\
&\leq C\|f\|_{\Hp}.
\end{align*}
\end{corollary}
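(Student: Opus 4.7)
The plan is to deduce Corollary \ref{cor:maximal} from Theorem \ref{thm:chargeneral} by applying the classical maximal function characterization of $L^{p}(\Rn)$ separately in each direction $\w\in S^{n-1}$. The key observation is that, by Fubini, the second term of the equivalent norm in Theorem \ref{thm:chargeneral} may be rewritten as
\[
\Big(\int_{\Sp}|\ph_{\w}(D)f(x)|^{p}\ud x\ud\w\Big)^{\frac{1}{p}}=\Big(\int_{S^{n-1}}\|\ph_{\w}(D)f\|_{L^{p}(\Rn)}^{p}\ud\w\Big)^{\frac{1}{p}},
\]
so it suffices to replace the inner $L^{p}(\Rn)$ norm by an equivalent maximal-function norm, with constants uniform in $\w$.

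To this end, I would invoke the standard Fefferman--Stein maximal characterization of $L^{p}(\Rn)$ (see e.g.~\cite[Chapter III]{Stein93}): for $p\in(1,\infty)$ and $g\in\Sw'(\Rn)$, one has $g\in L^{p}(\Rn)$ if and only if $g$ is a bounded distribution and $M_{\Phi}g:=\sup_{\sigma>0}|\Phi_{\sigma}(D)g|$ lies in $L^{p}(\Rn)$, in which case $\|g\|_{L^{p}(\Rn)}\eqsim \|M_{\Phi}g\|_{L^{p}(\Rn)}$ with constants depending only on $\Phi$ and $p$. Applying this with $g=\ph_{\w}(D)f$ for each $\w$ (the constants are automatically uniform, since $\Phi$ does not depend on $\w$) and integrating the $p$-th power in $\w$ then yields
\[
\Big(\int_{\Sp}|\ph_{\w}(D)f(x)|^{p}\ud x\ud\w\Big)^{\frac{1}{p}}\eqsim \Big(\int_{\Sp}\sup_{\sigma>0}|\Phi_{\sigma}(D)\ph_{\w}(D)f(x)|^{p}\ud x\ud\w\Big)^{\frac{1}{p}},
\]
together with the correspondence between the a.e.\ condition ``$\ph_{\w}(D)f\in L^{p}(\Rn)$'' appearing in Theorem \ref{thm:chargeneral} and the a.e.\ condition ``$\ph_{\w}(D)f$ is a bounded distribution'' in Corollary \ref{cor:maximal}. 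Combining this with Theorem \ref{thm:chargeneral} delivers both the membership characterization and the norm equivalence.

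The only mild technical point to verify is the joint measurability of $(x,\w)\mapsto \sup_{\sigma>0}|\Phi_{\sigma}(D)\ph_{\w}(D)f(x)|$, which is needed to invoke Fubini. This can be handled by noting that for fixed $x$ and $\w$ the map $\sigma\mapsto \Phi_{\sigma}(D)\ph_{\w}(D)f(x)$ is continuous (since $\Phi\in\Sw(\Rn)$ and $\ph_{\w}(D)f\in\Sw'(\Rn)$), so the supremum may be replaced by one over a countable dense subset of $(0,\infty)$, reducing to a countable supremum of measurable functions. I do not anticipate any genuine obstacle; the argument is a clean transfer of the classical $L^{p}(\Rn)$ maximal characterization through Theorem \ref{thm:chargeneral}, and the same scheme would apply verbatim to yield the vertical square function characterization in Corollary \ref{cor:vertical}.
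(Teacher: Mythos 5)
Your argument is correct and is essentially the paper's own proof: rewrite the $\Sp$-integral as $\big(\int_{S^{n-1}}\|\ph_{\w}(D)f\|_{L^{p}(\Rn)}^{p}\ud\w\big)^{1/p}$ and apply the classical maximal function characterization of $L^{p}(\Rn)$ to each $\ph_{\w}(D)f$, with constants uniform in $\w$ since $\Phi$ is fixed, then invoke Theorem \ref{thm:chargeneral}. The paper cites \cite[Theorems 2.1.2 and 2.1.4]{Grafakos14b} rather than Stein, but this is the same classical input, so there is nothing to add.
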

\begin{proof}
Since
\[
\Big(\int_{\Sp}|\ph_{\w}(D)f(x)|^{p}\ud x\ud\w\Big)^{1/p}=\Big(\int_{S^{n-1}}\|\ph_{\w}(D)f\|_{L^{p}(\Rn)}^{p}\ud\w\Big)^{1/p},
\]
the corollary follows from Theorem \ref{thm:chargeneral} and the maximal function characterization of $L^{p}(\Rn)$ for $1<p<\infty$ (see \cite[Theorems 2.1.2 and 2.1.4]{Grafakos14b}):
\[
\|\ph_{\w}(D)f\|_{L^{p}(\Rn)}\eqsim \Big(\int_{\Rn}\sup_{\sigma>0}|\Phi_{\sigma}(D)\ph_{\w}(D)f(x)|^{p}\ud x\Big)^{1/p}
\]
for $w\in S^{n-1}$.
\end{proof}

Next, we characterize $\Hp$ in terms of vertical square functions. 

\begin{corollary}\label{cor:vertical}
Let $p\in(1,\infty)$. Then there exists a constant $C>0$ such that the following holds for all $f\in\Sw'(\Rn)$. One has $f\in\Hp$ if and only if $q(D)f\in L^{p}(\Rn)$ and 
\[
\Big(\int_{\Sp}\Big(\int_{0}^{1}|\theta_{\w,\sigma}(D)f(x)|^{2}\frac{\ud\sigma}{\sigma}\Big)^{\frac{p}{2}}\ud x\ud\w\Big)^{\frac{1}{p}}<\infty.
\]
Moreover, if $f\in \Hp$ then
\begin{align*}
\frac{1}{C}\|f\|_{\Hp}&\leq \|q(D)f\|_{L^{p}(\Rn)}+\Big(\int_{\Sp}\Big(\int_{0}^{1}|\theta_{\w,\sigma}(D)f(x)|^{2}\frac{\ud\sigma}{\sigma}\Big)^{\frac{p}{2}}\ud x\ud\w\Big)^{\frac{1}{p}}\\
&\leq C\|f\|_{\Hp}.
\end{align*}
\end{corollary}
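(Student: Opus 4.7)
The plan is to derive the corollary from Theorem \ref{thm:chargeneral} by applying the classical vertical Littlewood--Paley characterization of $L^{p}(\Rn)$ to $\ph_{\w}(D)f$ for each $\w\in S^{n-1}$ separately. By Fubini's theorem, the mixed expression in the statement equals
\[
\Big(\int_{S^{n-1}}\|S_{\w}f\|_{L^{p}(\Rn)}^{p}\ud\w\Big)^{\frac{1}{p}},\quad S_{\w}f(x):=\Big(\int_{0}^{1}|\Psi_{\sigma}(D)\ph_{\w}(D)f(x)|^{2}\frac{\ud\sigma}{\sigma}\Big)^{\frac{1}{2}},
\]
which one recognizes as the vertical Littlewood--Paley square function of $\ph_{\w}(D)f$, truncated to $\sigma\in(0,1)$. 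In view of Theorem \ref{thm:chargeneral}, it suffices to show, uniformly in $\w$, that $\|S_{\w}f\|_{L^{p}(\Rn)}$ and $\|\ph_{\w}(D)f\|_{L^{p}(\Rn)}$ are comparable modulo a $\w$-independent term controlled by $\|q(D)f\|_{L^{p}(\Rn)}$.

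The bound $\|S_{\w}f\|_{L^{p}}\lesssim\|\ph_{\w}(D)f\|_{L^{p}}$ is immediate, since truncating the $\sigma$-integral can only shrink the square function, and the Calder\'on condition \eqref{eq:Psi} together with the classical Littlewood--Paley inequality controls the full $\sigma\in(0,\infty)$ square function of $\ph_{\w}(D)f$ by its $L^{p}$-norm. Integrating against $\ud\w$ and invoking Theorem \ref{thm:chargeneral} then bounds this contribution, together with $\|q(D)f\|_{L^{p}(\Rn)}$, by a constant multiple of $\|f\|_{\Hp}$.

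For the reverse direction I would write out the \emph{full} vertical square function of $\ph_{\w}(D)f$ and split $\int_{0}^{\infty}=\int_{0}^{1}+\int_{1}^{\infty}$. The first piece is exactly $S_{\w}f$. For the tail, note that for $\sigma\geq 1$ one has $\supp(\Psi_{\sigma})\subseteq\{|\zeta|\leq \tfrac{5}{4\sigma}\}$ and $\supp(\ph_{\w})\subseteq\{|\zeta|\geq\tfrac{1}{8}\}$ by Remark \ref{rem:phifunctions}, so $\supp(\Psi_{\sigma}\ph_{\w})\subseteq\{\tfrac{1}{8}\leq|\zeta|\leq\tfrac{5}{4}\}$, a region on which $q\equiv 1$. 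Hence $\Psi_{\sigma}(D)\ph_{\w}(D)f=\Psi_{\sigma}(D)\ph_{\w}(D)q(D)f$ whenever $\sigma\geq 1$, and a second application of Littlewood--Paley --- using the $\w$-uniform symbol estimates for $\ph_{\w}q\in C^{\infty}_{c}(\Rn)$ coming from Remark \ref{rem:phifunctions} --- bounds the tail by a constant multiple of $\|q(D)f\|_{L^{p}(\Rn)}$, uniformly in $\w$. Integrating in $\w$ and applying Theorem \ref{thm:chargeneral} once more completes the proof.

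The only technical obstacle is the truncation of the $\sigma$-integral to $(0,1)$, which forces one to account separately for the tail at high scales. This is precisely where the disjointness between $\supp(1-q)$ and the frequency support of $\Psi_{\sigma}\ph_{\w}$ for $\sigma\geq 1$ is exploited, allowing the tail to be absorbed into the low-frequency term in a $\w$-uniform manner. No anisotropic averaging issues arise, since unlike in the proof of Theorem \ref{thm:chargeneral} there are no spatial balls to compare.
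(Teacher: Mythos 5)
Your proposal is correct and follows essentially the same route as the paper: both reduce the corollary to Theorem \ref{thm:chargeneral} by applying the classical vertical Littlewood--Paley characterization of $L^{p}(\Rn)$ fiberwise to $\ph_{\w}(D)f$, with uniformity in $\w$ and the low-frequency term controlled via $\|q(D)\ph_{\w}(D)f\|_{L^{p}(\Rn)}\lesssim\|q(D)f\|_{L^{p}(\Rn)}$. The only difference is presentational: the paper invokes the inhomogeneous (truncated) characterization from Triebel directly, whereas you reconstruct it by hand, splitting off the $\sigma\geq 1$ tail and absorbing it into the $q(D)f$ term via the frequency-support observation $\Psi_{\sigma}\ph_{\w}=\Psi_{\sigma}\ph_{\w}q$.
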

\begin{proof}
By \cite[Sections 1.5.1 and 2.4.2]{Triebel92}, a $g\in\Sw'(\Rn)$ satisfies $g\in L^{p}(\Rn)$ if and only if $q(D)g\in L^{p}(\Rn)$ and $\int_{\Rn}(\int_{0}^{1}|\Psi_{\sigma}(D)g(x)|^{2}\frac{\ud\sigma}{\sigma})^{p/2}\ud x<\infty$, in which case
\begin{equation}\label{eq:vertical}
\|g\|_{L^{p}(\Rn)}\eqsim \|q(D)g\|_{L^{p}(\Rn)}+\Big(\int_{\Rn}\Big(\int_{0}^{1}|\Psi_{\sigma}(D)g(x)|^{2}\frac{\ud\sigma}{\sigma}\Big)^{\frac{p}{2}}\ud x\Big)^{\frac{1}{p}}.
\end{equation}
With $g=\ph_{\w}(D)f$ for $\w\in S^{n-1}$, one can now use Theorem \ref{thm:chargeneral} to obtain the desired conclusion:
\begin{align*}
&\|f\|_{\Hp}\eqsim\|q(D)f\|_{L^{p}(\Rn)}+\Big(\int_{S^{n-1}}\|\ph_{\w}(D)f\|_{L^{p}(\Rn)}^{p}\ud\w\Big)^{\frac{1}{p}}\\
&\eqsim \|q(D)f\|_{L^{p}(\Rn)}+\Big(\int_{\Sp}\Big(\int_{0}^{1}|\Psi_{\sigma}(D)\ph_{\w}(D)f(x)|^{2}\frac{\ud\sigma}{\sigma}\Big)^{\frac{p}{2}}\ud x\ud\w\Big)^{\frac{1}{p}},
\end{align*}
where we also used that $\|q(D)\ph_{\w}(D)f\|_{L^{p}(\Rn)}\lesssim \|q(D)f\|_{L^{p}(\Rn)}$ for an implicit constant independent of $w\in S^{n-1}$.
\end{proof}

\section*{Acknowledgements} 

The author would like to thank Andrew Hassell and Pierre Portal for numerous helpful suggestions, and the anonymous referee for useful comments that have helped improved the article.

\bibliographystyle{plain}
\bibliography{Bibliography}

\end{document}